\numberwithin{equation}{section}
\newcommand{\N}{\nabla}
\newcommand{\op}{\overline{\partial}}
\newcommand{\p}{\partial}
\theoremstyle{plain}
\newtheorem{theorem}{Theorem}[section]
\newtheorem{definition}[theorem]{Definition}
\newtheorem{lemma}[theorem]{Lemma}
\newtheorem{prop}[theorem]{Proposition}
\newtheorem{rem}[theorem]{Remark}
\newcommand{\ov}[1]{\overline{#1}}
\newcommand{\w}[1]{\wedge}
\newcommand{\e}{\begin{equation}}
\newcommand{\ee}{\end{equation}}
\renewcommand{\l}[1]{\label{#1}}
\newcommand\C{{\mathbb C}}
\newcommand\R{{\mathbb R}}
\newcommand\Z{{\mathbb Z}}
\newcommand\im{\,{\rm Im}\,}
\renewcommand\d{{\partial}}
\newcommand\f{{\varphi}}
\begin{document}
\title{A parabolic flow of balanced metrics}
\author{Lucio Bedulli and Luigi Vezzoni}
\date{\today}

\subjclass{53C44, 53C55, 35K55, 53C10}
\thanks{This work was supported by the project FIRB ``Geometria differenziale e teoria geometrica delle funzioni'',
 the project PRIN
\lq\lq  Variet\`a reali e complesse: geometria, topologia e analisi armonica" and by G.N.S.A.G.A. of I.N.d.A.M}
\address{Dipartimento di matematica \\ Universit\`a dell'Aquila\\
via Vetoio\\ 67100 L'Aquila\\ Italy}
\email{lucio.bedulli@dm.univaq.it}
\address{Dipartimento di Matematica G. Peano \\ Universit\`a di Torino\\
Via Carlo Alberto 10\\
10123 Torino\\ Italy}
 \email{luigi.vezzoni@unito.it}

\begin{abstract}
We prove a general criterion to establish existence and uniqueness of a short-time solution to an evolution equation involving ``closed" sections of a vector bundle, generalizing a method used by Bryant and Xu \cite{BryantXu} for studying the Laplacian flow in $G_2$-geometry. We apply this theorem in balanced geometry introducing a natural extension of the Calabi flow to the balanced case. We show that this flow has always a unique short-time solution belonging to the same Bott-Chern cohomology class of the initial balanced structure and that it preserves the K\"ahler condition. Finally we study explicitly the flow on the Iwasawa manifold.
\end{abstract}
\maketitle

\tableofcontents


\section{Introduction}





The notion of balanced metric on a complex manifold has been introduced by Michelsohn in \cite{M}. Given a complex manifold $M^n$, a balanced metric $g$ on $M$ is an Hermitian metric whose fundamental 2-form $\omega$ satisfies
\begin{equation}\label{balanced}
d\omega^{n-1}=0\,;
\end{equation}
the pair $(M,g)$ is called a \emph{balanced manifold}. A balanced structure can be alternatively regarded as a closed {\em positive} real $(n-1,n-1)$-form $\varphi$ (see section \ref{balanced} below).
Moreover, balanced structures are characterized by the property
$$
\Delta_{\partial}f=\Delta_{\op}f=2\Delta_{d}f
$$
for every $f\in C^{\infty}(M,\C)$ (see \cite{Gaud}). An important feature of the balanced category is that, in the compact case, it is stable under modifications (see \cite{AB2}). This means that  a
modification $F\colon \tilde M\to M$ of a compact balanced manifold always admits a balanced metric; in particular modifications of compact K\"ahler manifolds admit balanced metrics when they do not admit any K\"ahler metric. From the viewpoint of special connections, an Hermitian metric is balanced if and only all the canonical Hermitian connections (according to the definition introduced by Gauduchon in \cite{Gaudbumi}) have the same Ricci forms (see \cite{GGP}).

\medskip Balanced metrics are mainly interesting on compact manifolds non-admitting K\"ahler structures. Nilmanifolds (or more generally solvmanifolds) provide examples of such spaces (see e.g. \cite{elsa, ugarte}). In particular, the Iwasawa manifold has balanced metrics and can be considered as a fundamental example.  An important class of examples is given by $6$-dimensional
twistor spaces, since the canonical Hermitian metric of the twistor space of an anti-self-dual $4$-dimensional Riemannian manifold is always balanced (see \cite{M}) and by torus bundles over Riemannian surfaces. More examples of balanced manifolds have been obtained in \cite{fuliyau} by considering  conifold transitions of Calabi-Yau threefolds;
these last examples include balanced structures on the connected sum of two or more copies of $S^3 \times S^3$.

\medskip
In this paper we focus on parabolic flows involving balanced metrics. Our idea consists in a generalization of the Calabi flow to the balanced case. The Calabi flow is an important flow in K\"ahler geometry which arises as the gradient flow of the Calabi functional
$$
\omega \mapsto \int_{M}\left(s_{\omega}\right)^2\,\frac{\omega^n}{n!}
$$
when it is  restricted to the the K\"ahler cone of an initial K\"ahler metric $\omega_0$
$$
C_{\omega_0}=\left\{\omega_0+i\d\op\,\phi>0\,\,:\,\,\phi\in C^{\infty}(M,\R)\right\}
$$
where $s_{\omega}$ is the scalar curvature of the metric $\omega$ (see \cite{chen}).
The flow has the following equation
\begin{equation}\label{calabi-flow}
\begin{cases}
\frac{\partial}{\partial t}\omega(t)=i\d\op s_{\omega(t)}\\
\omega(0)=\omega_0\,.
\end{cases}
\end{equation}
The evolution equation \eqref{calabi-flow} can be alternatively rewritten in terms of positive $(n-1,n-1)$-forms as
\begin{equation}\label{calabi-flow-balanced}
\begin{cases}
& \frac{\partial}{\partial t}\varphi(t)=i\partial\bar{\partial} *_t(\rho_t \wedge *_t\varphi(t))\\
& \varphi(0)=\varphi_0\,,
\end{cases}
\end{equation}
where $\rho_t$ is the Ricci form of $\varphi(t)$ and $\varphi_0=*_{\omega_0}\omega_0$. Flow \eqref{calabi-flow-balanced} still makes sense in the balanced case when we replace $\rho_t$  with the Ricci form of the Chern connection. But it turns out that \eqref{calabi-flow-balanced} is not still parabolic when it is extended to the context of balanced metrics and  because of this reason we modify of \eqref{calabi-flow-balanced} in the balanced case as follows. 

Let $M$ be an $n$-dimensional complex manifold and $\varphi_0$ be a positive closed $(n-1,n-1)$-form on $M$.
We consider the following flow of balanced structures $\varphi$ on $M$
\begin{equation}\label{theflow}
\begin{cases}
& \frac{\partial}{\partial t}\varphi(t)=i\partial\bar{\partial} *_t(\rho_t \wedge *_t\varphi(t))+(n-1)\Delta_{BC}\varphi(t)\\
&d\varphi(t)=0\\
& \varphi(0)=\varphi_0\,,
\end{cases}
\end{equation}
where $$\Delta_{BC} = \partial\ov{\partial}\ov{\partial}^*\partial^*+\ov{\partial}^*\partial^*\partial\ov{\partial}+\ov{\partial}^*\partial\partial^*\ov{\partial}+\partial^*\ov{\partial}\ov{\partial}^*\partial+\ov{\partial}^*\ov{\partial}+\partial^*\partial$$ is the modified Bott-Chern Laplacian and $*_t$ and $\rho_t$ are the Hodge operator and the Chern Ricci form induced by $\varphi(t)$.
The modified Bott-Chern Laplacian is an elliptic $4$-order operator whose kernel describes the Bott-Chern cohomology of an Hermitian manifold. The Bott-Chern cohomology is defined as
$$
H_{BC}(M)=\frac{\ker d }{\im(\partial\op)}\,,
$$
and plays an important role in non-K\"ahler complex geometry. 

Flow \eqref{theflow} can be seen as a flow of $G$-structures with torsion. Flows of this kind have been largely studied in the last few years (see for instance \cite{Bryant, Gill, Gri, Kar, liwang, streets-tian1, streets-tian2, streets-tian3, tosatti2, tosatti3, luigiflow, witt1,witt2,XuYe}). 

We will prove the following

\begin{theorem}\label{main}
The flow \eqref{theflow} admits a unique solution in the Bott-Chern class of $[\varphi_0]$ defined in a maximal interval $[0,\epsilon)$. Moreover if  the initial structure is K\"ahler then \eqref{theflow} reduces to the Calabi flow.
\end{theorem}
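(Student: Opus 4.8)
The two assertions are of a different nature: short-time existence and uniqueness will be obtained as an application of the general existence and uniqueness criterion established in this paper, whereas the reduction to the Calabi flow in the K\"ahler case will be an elementary consequence of that uniqueness. The first step is to observe that the right-hand side of \eqref{theflow} is $\partial\bar\partial$-exact. Indeed, on a $d$-closed form $\varphi$ every summand of $\Delta_{BC}$ except $\partial\bar\partial\bar\partial^*\partial^*$ carries a factor $\partial\varphi$ or $\bar\partial\varphi$ and therefore vanishes, so that $\Delta_{BC}\varphi=\partial\bar\partial\bar\partial^*\partial^*\varphi=i\partial\bar\partial(-i\,\bar\partial^*\partial^*\varphi)$ and \eqref{theflow} rewrites, along $d$-closed forms, as
\[
\frac{\partial}{\partial t}\varphi(t)=i\partial\bar\partial\,Q_t(\varphi(t)),\qquad Q_t(\varphi):=*_t(\rho_t\w *_t\varphi)-i(n-1)\,\bar\partial^*\partial^*\varphi,
\]
where $Q_t$ is a second-order (nonlinear, metric-dependent) operator valued in real $(n-2,n-2)$-forms. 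Two facts are then immediate and should be recorded at once: $i\partial\bar\partial(\,\cdot\,)$ is always $d$-closed, so the constraint $d\varphi(t)=0$ propagates automatically; and $\partial_t\varphi(t)$ is $\partial\bar\partial$-exact, so $\varphi(t)$ stays in the Bott-Chern class of $\varphi_0$ for as long as it exists. This already gives the ``in the Bott-Chern class of $[\varphi_0]$'' part of the statement.

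Next I would verify the hypotheses of the general criterion for the flow $\partial_t\varphi=i\partial\bar\partial Q_t(\varphi)$, $d\varphi=0$. Following the scheme of Bryant--Xu that the criterion abstracts, and using that every form in the class of $\varphi_0$ equals $\varphi_0+i\partial\bar\partial\gamma$ for some real $(n-2,n-2)$-form $\gamma$, one seeks the solution as $\varphi(t)=\varphi_0+i\partial\bar\partial\gamma(t)$ with $\gamma(0)=0$ and passes to the potential equation
\[
\frac{\partial}{\partial t}\gamma(t)=Q_t\big(\varphi_0+i\partial\bar\partial\gamma(t)\big)+\mathcal G\big(\gamma(t)\big),
\]
where $\mathcal G$ is a fourth-order ``gauge'' operator with values in $\ker\big(i\partial\bar\partial\colon\Lambda^{n-2,n-2}\to\Lambda^{n-1,n-1}\big)$, hence harmless for $\partial_t\varphi=i\partial\bar\partial\partial_t\gamma$, to be chosen exactly as the $dd^*$-term of Bryant--Xu so as to make the potential equation genuinely parabolic. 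The decisive computation is that of the principal symbol of $\gamma\mapsto Q_t(\varphi_0+i\partial\bar\partial\gamma)$: the term $-i(n-1)\bar\partial^*\partial^*(i\partial\bar\partial\gamma)=(n-1)\bar\partial^*\partial^*\partial\bar\partial\gamma$ contributes the (positive semidefinite but degenerate) symbol of $\gamma\mapsto\|\partial\bar\partial\gamma\|^2$, while $*_t(\rho_t\w*_t\varphi)$ contributes the linearization of the Chern--scalar-curvature type functional, which is itself degenerate --- precisely the reason why \eqref{calabi-flow-balanced} is not parabolic in the balanced setting. What has to come out is that, with the coefficient $n-1$ and for a suitable $\mathcal G$, these contributions add up to a symbol which is negative definite transversally to the gauge directions, which is exactly the parabolicity condition demanded by the criterion. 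Granting this, the criterion yields a unique short-time solution $\gamma$, hence a unique $\varphi$, defined on a maximal interval $[0,\epsilon)$ and lying in the Bott-Chern class of $\varphi_0$; uniqueness among \emph{all} solutions of \eqref{theflow} --- not only those written in potential form --- is part of the conclusion of the criterion, again because every solution is forced to stay in the class of $\varphi_0$.

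For the last assertion, suppose $\varphi_0=*_{\omega_0}\omega_0=\tfrac1{(n-1)!}\,\omega_0^{\,n-1}$ with $\omega_0$ K\"ahler. Let $\omega(t)$ solve the Calabi flow \eqref{calabi-flow} with $\omega(0)=\omega_0$; it exists for short time by \cite{chen} and remains K\"ahler, so $\tilde\varphi(t):=*_{\omega(t)}\omega(t)$ is $d$-closed. Moreover $d^*\tilde\varphi(t)=\pm*_t\,d\omega(t)=0$, and since $\partial^*\tilde\varphi(t)$ and $\bar\partial^*\tilde\varphi(t)$ have pure and distinct bidegrees they must vanish separately, whence $\Delta_{BC}\tilde\varphi(t)=0$. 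As the Chern connection of a K\"ahler metric is the Levi-Civita connection, the Chern--Ricci form in \eqref{theflow} is the Ricci form, and the identity relating \eqref{calabi-flow} with \eqref{calabi-flow-balanced} gives $i\partial\bar\partial *_t(\rho_t\w*_t\tilde\varphi(t))=\partial_t\tilde\varphi(t)$. Hence $\tilde\varphi(t)$ solves \eqref{theflow} with the same initial datum as $\varphi(t)$, so the uniqueness established above forces $\varphi(t)=\tilde\varphi(t)$; in particular \eqref{theflow} preserves the K\"ahler condition and, under the correspondence $\omega\leftrightarrow\tfrac1{(n-1)!}\,\omega^{\,n-1}$, it is the Calabi flow.

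\emph{Main difficulty.} The only genuinely non-routine point is the parabolicity check: one has to perform the principal-symbol computation of the linearized right-hand side of \eqref{theflow} (equivalently of the potential equation) and prove that adding $(n-1)\Delta_{BC}$, with this exact constant, cancels the degeneracy of the linearization of \eqref{calabi-flow-balanced}, turning the symbol into one that is negative definite transversally to the gauge directions. The accompanying choices --- of the gauge operator $\mathcal G$ and of the $\partial\bar\partial$-potential $\gamma$, the latter non-unique as soon as $n\geq4$ --- so that the abstract criterion applies verbatim are a further, essentially bookkeeping, technicality.
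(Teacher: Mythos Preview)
Your overall architecture is right---rewrite the right-hand side as $i\partial\bar\partial$ of something on closed forms, reduce to the general criterion, then deduce the K\"ahler case from uniqueness---but the execution diverges from the paper in two linked ways, and the crucial computation is missing.

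\medskip
\textbf{How the paper applies the criterion.} The paper does \emph{not} pass to a potential equation or introduce any gauge operator~$\mathcal G$. Theorem~\ref{general1} is designed precisely so that one works directly with $\varphi\in U$: one must exhibit a strongly elliptic $\tilde L_\varphi$ agreeing with $L_{*|\varphi}$ on $D$-exact forms, and a strongly elliptic $l_\varphi$ on $E_-$ with $L_{*|\varphi}(D\theta)=Dl_\varphi(\theta)$. The relevant Hodge system is $E_-=\Lambda^{n-2,n-2}_\R$, $E=\Lambda^{n-1,n-1}_\R$, $D=i\partial\bar\partial$, and $\Delta_D=\Delta_A$ the \emph{Aeppli} Laplacian; condition~\eqref{G} is exactly Proposition~\ref{Fisdirectsummand}. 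Your $\mathcal G$ is doing by hand what the Aeppli Green operator already does inside the criterion; in the paper one simply takes $l_\varphi=-(n-1)\Delta_A+i\Phi_\varphi\circ\partial\bar\partial$, using $\Delta_{BC}\circ\partial\bar\partial=\partial\bar\partial\circ\Delta_A$.

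\medskip
\textbf{The symbol computation you flagged but did not do.} This is the heart of the proof and has a specific mechanism you do not describe. Writing a closed variation as $\psi=h_1\varphi+*h_0$ with $h_0$ primitive, the paper uses Lemma~\ref{varrho} ($\dot\rho=-i\partial\bar\partial(\omega,\dot\omega)$) and Lemma~\ref{firstvariation} to obtain, up to $i\partial\bar\partial$ of zeroth-order terms,
\[
P_{*|\varphi}(\psi)=n\,\partial\bar\partial*\!\big(\partial\bar\partial h_1\wedge*\varphi\big),\qquad
Q_{*|\varphi}(\psi)=-\partial\bar\partial*\!\big(\partial\bar\partial h_1\wedge*\varphi+(1-n)\partial\bar\partial h_0\big),
\]
so that $L_{*|\varphi}(\psi)=(n-1)\,\partial\bar\partial*\!\big(\partial\bar\partial h_1\wedge*\varphi+\partial\bar\partial h_0\big)$, which one then recognizes as $-(n-1)(\Delta_{BC})_\varphi\psi$ on closed $\psi$. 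The point is that the linearization of the Ricci term $P$ sees \emph{only} the trace component $h_1$, and the coefficient $(n-1)$ in front of $\Delta_{BC}$ is chosen so that after cancellation the $h_1$- and $h_0$-pieces recombine into the full $\Delta_{BC}$ symbol. Your description (``positive semidefinite but degenerate'' plus a contribution from the scalar-curvature linearization) does not capture this cancellation and does not lead to a proof without carrying it out.

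\medskip
\textbf{K\"ahler case.} Your argument is essentially the paper's: show the Calabi-flow path $\tilde\varphi(t)=\tfrac1{(n-1)!}\omega(t)^{n-1}$ solves \eqref{theflow} and invoke uniqueness. The only substantive step you cite rather than prove is the identity $i\partial\bar\partial*(\rho\wedge*\varphi)=\tfrac1{(n-2)!}(i\partial\bar\partial s)\wedge\omega^{n-2}$ for K\"ahler $\omega$; in the paper this is Lemma~\ref{lemmacalabi} and uses the closedness of both $\omega$ and $\rho$.
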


It turns out that the flow \eqref{theflow} is parabolic in a suitable sense when restricted to the space of balanced structures of a complex manifold. From this point of view \eqref{theflow}  is analogous to  the Laplacian flow introduced by Bryant in \cite{Bryant} in $G_2$ geometry.  Following the approach of Bryant-Xu in \cite{BryantXu} we prove a general criterion to establish short-time existence and uniqueness of the solution of an evolution equation involving \lq\lq closed '' sections of a vector bundle (see theorem \ref{general1} in section \ref{sectiongeneral1}).
The proof of theorem \ref{main} is then obtained as an application of our theorem \ref{general1}.

In the last section of the paper we study the flow on the Iwasawa manifold 
computing an explicit solution.

\medskip
\noindent {\em{Acknowledgements}}. The authors would like to thank  Carlo Mantegazza for useful conversations about parabolic equations on smooth manifolds and are very grateful to Enrico Priola for his fundamental help in understanding Hamilton's paper and to Valentino Tosatti for useful conversations.
Moreover the second author wishes to thank Frederik Witt who, during a useful an important conversation, observed a possible link between the flow considered in the paper and the Calabi flow.  Part of the work has been done during a visit of the first author to the University of Turin. The first author is grateful to the University and to the Politecnico of Turin for their hospitality. Finally we would like to thank an anonymous referee for helping us improve the exposition of the paper.

\section{Preliminary features of balanced metrics}
\subsection{Algebraic features}
Consider on $\R^{2n}$ the standard (linear) complex structure $J_0$ and let $\Lambda^{1,1}_+$ be the set of
$J_0$-positive $(1,1)$-forms. Every $\omega\in \Lambda^{1,1}_+$ determines the Hermitian metric
$$
g(X,Y)=\omega(X,J_0Y)
$$
and a  Hodge \lq\lq star'' operator $*\colon \Lambda^{r,s}\to \Lambda^{n-s,n-r}$ defined by
\[
\alpha \wedge *\overline{\beta} = g(\alpha,\bar{\beta}) \frac{\omega^n}{n!}\,.
\]
Note that
$$
*\omega=\frac{1}{(n-1)!}\omega^{n-1}\,.
$$
Let $\mathcal Q \colon \Lambda^{1,1}_+\to  \Lambda^{n-1,n-1}$ be the smooth map
$$
\omega\mapsto *_{\omega}\omega\,,
$$
where we emphasise the dependence of $*$ on the choice of the metric, and let
$$
\Lambda_+^{n-1,n-1}=\mathcal Q (\Lambda_+^{1,1})
$$
be the set of {\em positive} $(n\!-\!\!1,\!n\!-\!\!1)$-forms. It is well known that $\mathcal Q$ is a diffeomorphism onto its image (see \cite{M}).

Now let us fix $\omega\in \Lambda^{1,1}_+$. Then $\Lambda^{1,1}$ splits as
\begin{equation}
\label{dec(1,1)}
\Lambda^{1,1}=\mathbb{C}\,\omega\oplus\Lambda^{1,1}_0
\end{equation}
where
$$
\Lambda^{1,1}_0=\{\sigma\in \Lambda^{1,1}\,\,: \,\,\sigma\wedge \omega^{n-1}=0\}\,
$$
is the set of {\em primitive} (1,1)-forms.
In the same way $\Lambda^{n-1,n-1}$ splits as

$$
\Lambda^{n-1,n-1}=\mathbb{C}\,\varphi+\Lambda^{n-1,n-1}_0
$$
where $\varphi=*\omega$ and
$$
\Lambda^{n-1,n-1}_0=\left\{\gamma\in \Lambda^{n-1,n-1}\,\,: \,\,\gamma\wedge \omega=0\right\}\,.
$$
The following is an algebraic lemma which will be useful in the sequel. The proof is a straightforward computation.
\begin{lemma}
\label{star(1,1)}
Let $\sigma$ be in $\Lambda^{1,1}_0$, then for $0\leq k \leq n-2$
$$
*(\sigma \wedge \omega^k)=-\frac{1}{(n-2-k)!}\,\sigma\wedge \omega^{n-2-k}\,.
$$
In particular if $h=h_1\omega+h_0\in \Lambda^{1,1}$, then
$$
*h=\frac{1}{(n-1)!}h_1\,\omega^{n-1}-\frac{1}{(n-2)!}h_0\wedge \omega^{n-2}\,.
$$
\end{lemma}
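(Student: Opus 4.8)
Both identities are pointwise and purely algebraic, so the plan is to fix a point and work on $(\R^{2n},J_0)$ with $\omega\in\Lambda^{1,1}_+$, normalized after a complex-linear change of coordinates to $\omega=\frac{i}{2}\sum_{j=1}^{n}dz_j\wedge d\bar z_j$. Three observations drive the argument: $*$, the map $\sigma\mapsto\sigma\wedge\omega^k$ and the splitting $\Lambda^{1,1}=\C\,\omega\oplus\Lambda^{1,1}_0$ are all $\C$-linear; the unitary group preserving $\omega$ acts by orientation-preserving isometries of the associated metric, hence its action on forms commutes with $*$; and on the standard basis the Hodge star is trivial to compute. I would first use these to reduce to a single model form, and then compute both sides directly on it.

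For the reduction: by $\C$-linearity it suffices to treat a real primitive $(1,1)$-form $\sigma$, and any such form is $\frac{i}{2}$ times a Hermitian matrix, so a unitary change of holomorphic coordinates brings it to $\sigma=\frac{i}{2}\sum_{j}\lambda_j\,dz_j\wedge d\bar z_j$ with $\lambda_j\in\R$; here primitivity, $\sigma\wedge\omega^{n-1}=0$, is exactly the condition $\sum_j\lambda_j=0$. Hence $\sigma$ is a real combination of the elementary primitive forms $\eta_p=\frac{i}{2}(dz_p\wedge d\bar z_p-dz_{p+1}\wedge d\bar z_{p+1})$, $1\le p\le n-1$, and by linearity once more I only need to check the formula for $\sigma=\eta_1$.

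For the model computation, write $\beta_j=\frac{i}{2}dz_j\wedge d\bar z_j$, so that $\omega=\sum_j\beta_j$, the $\beta_j$ are pairwise orthonormal, $\beta_j\wedge\beta_j=0$, $\omega^m=m!\sum_{|I|=m}\beta_I$ with $\beta_I=\bigwedge_{j\in I}\beta_j$, and $\beta_{\{1,\dots,n\}}=\omega^n/n!$ is the volume form; since each $\beta_j$ has even degree, the defining relation of $*$ gives the clean rule $*\beta_I=\beta_{I^{c}}$, with no sign. Expanding $\eta_1\wedge\omega^k=k!\big(\sum_{J\ni 1}\beta_J-\sum_{J\ni 2}\beta_J\big)$ over index sets $J$ of cardinality $k+1$, the sets containing both $1$ and $2$ cancel — this is precisely the effect of primitivity — leaving an alternating sum over the $J$ that contain exactly one of $1,2$. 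Applying $*$ sends each $J$ to $J^{c}$, which interchanges the two families and produces $-k!$ times the analogous alternating sum over index sets of cardinality $n-1-k$; comparing with the same expansion of $\eta_1\wedge\omega^{n-2-k}$ identifies the result as a scalar multiple of $\eta_1\wedge\omega^{n-2-k}$, and collecting the factorial weights from the $\omega^m$'s reads off the constant (it is $-1/(n-2)!$ when $k=0$, and by my computation $-k!/(n-2-k)!$ in general). Finally the ``in particular'' follows at once by writing $h=h_1\omega+h_0$ with $h_0$ primitive and combining $*\omega=\frac{1}{(n-1)!}\omega^{n-1}$ with the case $k=0$ above.

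The whole argument is a routine computation; the only points needing care are the sign conventions for $*$ on decomposable $(1,1)$-forms and the bookkeeping of the factorial coefficients, the conceptual content being that primitivity of $\sigma$ is exactly what stops $*(\sigma\wedge\omega^k)$ from acquiring a component proportional to $\omega^{n-1-k}$. If one prefers a coordinate-free derivation, the same identity is the special case of bidegree $(1,1)$ of the classical formula for the Hodge star on primitive forms coming from the Lefschetz decomposition.
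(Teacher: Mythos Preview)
Your argument is correct and considerably more detailed than the paper's own treatment, which simply declares the result ``a straightforward computation'' and gives no proof. The reduction by unitary diagonalization to the model form $\eta_1=\beta_1-\beta_2$, followed by explicit bookkeeping with the orthonormal basis $\beta_I$ and the rule $*\beta_I=\beta_{I^c}$, is exactly the kind of computation intended; your remark that the same identity is the $(1,1)$ case of Weil's formula for the Hodge star on primitive forms is also apt.

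You are moreover right to flag the constant. The correct general identity is
\[
*(\sigma\wedge\omega^k)=-\frac{k!}{(n-2-k)!}\,\sigma\wedge\omega^{n-2-k},
\]
as your computation shows. One can cross-check this without any calculation: on even-degree real forms one has $*^2=\mathrm{id}$, and the coefficients $c_k=-k!/(n-2-k)!$ satisfy $c_k\,c_{n-2-k}=1$, whereas the paper's $-1/(n-2-k)!$ do not once $k\ge 2$. The slip is harmless for the rest of the paper, since the lemma is only invoked with $k=0$ (in the ``in particular'' statement and in the proof of Lemma~\ref{firstvariation}) and with $k=1$ (in the proof of Lemma~\ref{lemmacalabi}), and in both cases $k!=1$.
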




\subsection{Balanced structures on complex manifolds}\label{balanced}
Let $M^n$ be a complex manifold.
We denote by $\Lambda^{r,s}$ the bundle of complex forms of type $(r,s)$ on $M$ and by $C^\infty(M,\Lambda^{r,s})$ the vector space of its smooth sections.

Following the notation of the previous subsection, the following fiber bundles on $M$ are naturally defined
$$
\Lambda_+^{1,1}\,,\quad \Lambda_+^{n-1,n-1}\,.
$$

A {\em balanced} structure on $M$ is a global coclosed section $\omega$ of $\Lambda_+^{1,1}$, or, equivalently, is a closed section $\varphi$ of $\Lambda_+^{n-1,n-1}$. For the geometry of balanced manifolds we refer to \cite{M,AB1,AB2,fuyau,fuliyau,tosatti,saracco} and the references therein.
\begin{rem}{\em
We remark that if the fundamental form $\omega$ of an Hermitian metric $g$ on an $n$-dimensional complex manifold satisfies $d\omega^{k}=0$,  for some $k<n-1$, then $\omega$ is closed and $g$ is a K\"ahler metric. Therefore from the point of view of the powers of $\omega$, the balanced condition is the unique possible generalization of the K\"ahler one. On the other hand, another important generalization of K\"ahler structures is given by SKT metrics which are defined as Hermitian structures whose fundamental form is $\d\op$-closed. As remarked in \cite{ivanov,finosalamonparton} balanced and SKT structures are two generalizations transverse to each other since  a balanced structure is also SKT if and only if it  is K\"ahler.}
\end{rem}

\subsection{The Chern connection}\label{the chern connection}
Given an Hermitian manifold  $(M,g,J)$, the {\em Chern connection} is defined as the unique Hermitian connection $\N$
whose torsion has vanishing $(1,1)$-component.
This connection is {\em canonical} according to the terminology introduced by Gauduchon in  \cite{Gaudbumi} and has an important role in complex geometry. In the K\"ahler case it coincides with the Levi-Civita connection. The curvature tensor $R$ of $\N$ is defined as usual
$$
R(X,Y,Z,W)=g([\N_X,\N_Y]Z,W)-g(\N_{[X,Y]}Z,W)\,.
$$
Moreover it is defined the {\em Chern Ricci form}
$$
\rho(X,Y):=\frac12 \sum_{k=1}^{2n}R(X,Y,Je_k,e_k)
$$
where $\{e_k\}$ is an arbitrary orthonormal frame.
In complex notation we  can write
$$
\rho=i\rho_{k\bar l} dz^k\wedge d\bar{z}^l
$$
where
$$
\rho_{k \bar{l}}=-\frac{\partial^2}{\partial z^k \partial \bar{z}^l} \,{\rm log}\,G\,,
$$
$G$ being the determinant of the matrix $g_{i\bar{j}}=g(\frac{\partial}{\partial{z^i}},\frac{\partial}{\partial{\bar{z}^j}})$. Finally we denote by $s=g^{k\bar{l}}\rho_{k\bar{l}}$ the scalar curvature of $\N$.

\subsection{Bott-Chern and Aeppli cohomology complexes}\label{BCsection} On a complex manifold $(M,J)$ beside the standard de Rham and Dolbeault theory, two more cohomology complexes are worth considering.
The {\em Bott-Chern cohomology groups} are defined as
$$
H^{p,q}_{BC}(M)=\frac{\ker(d: C^\infty(M,\Lambda^{p,q}) \to C^\infty(M,\Lambda^{p+q+1}\otimes\C))}{\im(\partial\op: C^\infty(M,\Lambda^{p-1,q-1}) \to C^\infty(M,\Lambda^{p,q}))}\,,
$$
while the {\em Aeppli cohomology groups} are\\
$$
H^{p,q}_{A}(M)=\frac{\ker(\partial\op: C^\infty(M,\Lambda^{p,q}) \to C^\infty(M,\Lambda^{p+1,q+1}))}
{\partial\left(C^\infty(M,\Lambda^{p-1,q})\right) +\op\left( C^\infty(M,\Lambda^{p,q-1})\right) }
$$
see \cite{Angella,Demailly,bigolin,Schweitzer}.
As in the case of de Rham cohomology groups, when $M$ is compact, Bott-Chern and  Aeppli cohomology groups are isomorphic to the kernel of suitable linear differential operators acting on forms. More precisely, as soon as an Hermitian metric is fixed on $M$ we can define the {\em modified Bott-Chern Laplacian} (see \cite{Schweitzer})
$$\Delta_{BC} = \partial\ov{\partial}\ov{\partial}^*\partial^*+\ov{\partial}^*\partial^*\partial\ov{\partial}+\ov{\partial}^*\partial\partial^*\ov{\partial}+\partial^*\ov{\partial}\ov{\partial}^*\partial+\ov{\partial}^*\ov{\partial}+\partial^*\partial\,,$$
and the {\em modified Aeppli Laplacian}
$$\Delta_{A}:=\ov{\partial}^*\partial^*\partial\ov{\partial}+\partial\ov{\partial}\ov{\partial}^*\partial^*+\partial\ov{\partial}^*\ov{\partial}\partial^*+\ov{\partial}\partial^*\partial\ov{\partial}^*+\partial\partial^*+\ov{\partial}\ov{\partial}^*\,.$$
They both are fourth order elliptic operators on $C^\infty(M,\Lambda^{p,q})$ and define a Hodge-like decomposition. We will need just the decomposition induced by $\Delta_A$.
\begin{theorem}[\cite{Schweitzer}] If $(M,g,J)$ is a compact Hermitian manifold then we have the following
orthogonal decomposition for every $(p,q)$
$$
C^\infty(M,\Lambda^{p,q})=\ker \Delta_A \oplus (\im \partial + \im \op) \oplus \im(\partial\op)^*\,.
$$
\end{theorem}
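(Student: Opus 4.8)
The plan is to run the classical Hodge-theoretic argument, the key observation being that, after reordering, $\Delta_A$ is a sum of operators of the form $D^*D$. Writing $D_1=\partial\op$, $D_2=\op^*\partial^*$, $D_3=\op\partial^*$, $D_4=\partial\op^*$, $D_5=\partial^*$, $D_6=\op^*$ and using $(\partial\op)^*=\op^*\partial^*$, a termwise check against the definition of $\Delta_A$ gives $\Delta_A=\sum_{i=1}^{6}D_i^*D_i$. Hence $\Delta_A$ is a non-negative self-adjoint fourth order elliptic operator with $\langle\Delta_A\alpha,\alpha\rangle=\sum_{i}\|D_i\alpha\|^2$ for every smooth $(p,q)$-form $\alpha$, so $\Delta_A\alpha=0$ if and only if $D_i\alpha=0$ for all $i$. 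Since $D_2\alpha$, $D_3\alpha$ vanish as soon as $\partial^*\alpha=0$ and $D_4\alpha$ vanishes as soon as $\op^*\alpha=0$, this collapses to
$$
\ker\Delta_A=\{\alpha\in C^\infty(M,\Lambda^{p,q})\ :\ \partial^*\alpha=0,\ \op^*\alpha=0,\ \partial\op\,\alpha=0\}\,,
$$
i.e.\ $\ker\Delta_A$ is the space of Aeppli-harmonic $(p,q)$-forms.

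Next I would invoke the standard theory for the elliptic self-adjoint operator $\Delta_A$ on the compact manifold $M$: the Fredholm alternative together with elliptic regularity (to remain inside smooth forms) yields the $L^2$-orthogonal decomposition $C^\infty(M,\Lambda^{p,q})=\ker\Delta_A\oplus\im\Delta_A$. It then remains to identify $\im\Delta_A$ with $(\im\partial+\im\op)\oplus\im(\partial\op)^*$. Reading off the six summands of $\Delta_A\alpha$, three of them land in $\im\partial$ (namely $\partial\op\op^*\partial^*\alpha$, $\partial\op^*\op\partial^*\alpha$, $\partial\partial^*\alpha$), two in $\im\op$ (namely $\op\partial^*\partial\op^*\alpha$, $\op\op^*\alpha$) and the remaining one equals $(\partial\op)^*(\partial\op\,\alpha)\in\im(\partial\op)^*$; hence $\im\Delta_A\subseteq(\im\partial+\im\op)+\im(\partial\op)^*$.

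Finally I would check pairwise orthogonality of the three pieces $\ker\Delta_A$, $\im\partial+\im\op$ and $\im(\partial\op)^*$. One has $\im\partial+\im\op\subseteq\ker(\partial\op)$ because $\partial\op\,\partial=-\partial\partial\op=0$ and $\partial\op\,\op=0$, while $\im(\partial\op)^*$ is by definition orthogonal to $\ker(\partial\op)$, so $\im\partial+\im\op\perp\im(\partial\op)^*$. Moreover, if $\Delta_A\beta=0$ then $\partial^*\beta=\op^*\beta=0$ and $\partial\op\,\beta=0$ by the description of $\ker\Delta_A$, whence $\langle\beta,\partial\gamma\rangle=\langle\partial^*\beta,\gamma\rangle=0$, $\langle\beta,\op\gamma\rangle=\langle\op^*\beta,\gamma\rangle=0$ and $\langle\beta,(\partial\op)^*\gamma\rangle=\langle\partial\op\,\beta,\gamma\rangle=0$, so $\ker\Delta_A$ is orthogonal to the other two subspaces. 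Since these three mutually orthogonal subspaces together span $\ker\Delta_A\oplus\im\Delta_A=C^\infty(M,\Lambda^{p,q})$, their sum is all of $C^\infty(M,\Lambda^{p,q})$ and is automatically a direct sum, which is the claimed decomposition. There is no genuine obstacle here: the only non-formal input is the ellipticity of $\Delta_A$ and the attendant Fredholm/regularity package, already recorded above; the single point deserving care is checking that $\Delta_A=\sum_i D_i^*D_i$ with each summand individually of the shape $D^*D$, since this is exactly what turns $\langle\Delta_A\alpha,\alpha\rangle$ into a sum of squares and thereby pins down $\ker\Delta_A$.
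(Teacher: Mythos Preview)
The paper does not give its own proof of this theorem; it is quoted from \cite{Schweitzer} and used as input. Your sketch is the standard Hodge-theoretic argument and is correct: the identity $\Delta_A=\sum_{i=1}^6 D_i^*D_i$ checks term by term against the paper's definition of $\Delta_A$, the elliptic self-adjoint package yields $C^\infty(M,\Lambda^{p,q})=\ker\Delta_A\oplus\im\Delta_A$, and your inclusion $\im\Delta_A\subseteq(\im\partial+\im\op)+\im(\partial\op)^*$ together with the pairwise orthogonality checks finishes the decomposition. There is nothing to compare against in the paper itself.
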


The following proposition is an important step in the proof of theorem \ref{main}.
\begin{prop}\label{Fisdirectsummand} Let $G_A$ be the Green operator associated to the modified Aeppli Laplacian. Then for every $\psi\in \partial\op\,C^{\infty}(M,\Lambda^{p,q})$ we have
$$
\psi=\partial\op \,G_A(\partial\op)^*(\psi)\,.
$$
\end{prop}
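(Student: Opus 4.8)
The plan is to use the Hodge-like decomposition induced by the modified Aeppli Laplacian $\Delta_A$ from the theorem of Schweitzer quoted just above. First I would observe that if $\psi \in \partial\op\, C^\infty(M,\Lambda^{p,q})$, say $\psi = \partial\op\,\eta$, then in particular $\psi \in \im(\partial\op)^*$: indeed, writing $\eta$ according to the orthogonal splitting $C^\infty(M,\Lambda^{p,q}) = \ker\Delta_A \oplus (\im\partial + \im\op) \oplus \im(\partial\op)^*$ and noting that $\partial\op$ annihilates the first two summands (the first because $\ker\Delta_A$ is contained in $\ker\partial\op$, the second trivially since $\partial\op\partial = \partial\op\op = 0$), we may assume $\eta \in \im(\partial\op)^*$. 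On the other hand $\psi = \partial\op\,\eta$ clearly lies in the second summand $\im\partial + \im\op$ as seen from degree $(p+1,q+1)$. Actually the cleaner route is: apply the decomposition to $\psi$ itself in degree $(p+1,q+1)$; since $\psi \in \im(\partial\op) \subset \im\partial$, its component in $\ker\Delta_A$ and its component in $\im(\partial\op)^*$ both vanish, so $\psi = (\id - H_A)\psi = \Delta_A G_A \psi$ where $H_A$ is the harmonic projector, but more to the point $\psi$ is $\Delta_A$-exact.

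Next I would expand $\Delta_A G_A \psi$ using the explicit formula for $\Delta_A$. The key point is that when this operator is applied to an element of $\im(\partial\op)$, most of the six terms of $\Delta_A$ drop out. Writing $G_A\psi =: \xi$, we have $\psi = \Delta_A \xi$; but we also want to identify $\psi$ with $\partial\op$ of something. The decisive observation is that $\Delta_A$ commutes with $\partial\op$ (this is a standard property of the modified Laplacians, and can be checked directly from the formulas or is recorded in Schweitzer), hence $G_A$ commutes with $\partial\op$ on the appropriate degrees. So $\psi = \partial\op\,\eta$ gives $G_A\psi = \partial\op\, G_A\eta$, which already shows $\psi = \Delta_A(\partial\op\, G_A\eta)$. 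Then I would compute $\Delta_A$ applied to a $\partial\op$-exact form: inspecting the six summands of $\Delta_A$, every term containing a factor $\partial$ or $\op$ on the far right (namely $\ov\partial^*\partial^*\partial\ov\partial$, $\partial\ov\partial^*\ov\partial\partial^*$ no — let me be careful) — the terms that do not end in $\partial^*$ or $\op^*$ kill $\partial\op\,G_A\eta$ outright by $\op\circ\partial\op = \partial\circ\partial\op = 0$, while the remaining terms reorganize into $\partial\op$ composed with $(\partial\op)^*$ acting on the rest. Collecting, one gets exactly $\psi = \partial\op\,(\partial\op)^* G_A \psi = \partial\op\, G_A (\partial\op)^*\psi$, using once more that $G_A$ and $(\partial\op)^*$ commute.

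The main obstacle I anticipate is bookkeeping: making the cancellation in the six-term expression for $\Delta_A$ precise and verifying the commutation relations $[\Delta_A,\partial\op]=0$ and hence $[G_A,\partial\op]=0$, $[G_A,(\partial\op)^*]=0$ on the relevant bidegrees. These follow from the structural identities satisfied by $\partial,\op,\partial^*,\op^*$ (in particular $\op\partial = -\partial\op$ and the vanishing of $\partial^2$, $\op^2$), but one must check that no anomalous torsion terms appear — and here the relevant fact is that $\Delta_A$ was designed by Schweitzer precisely so that these commutations hold on a general Hermitian (not necessarily Kähler) manifold. Once that is granted, the identity $\psi = \partial\op\,G_A(\partial\op)^*\psi$ is a short formal manipulation. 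An alternative, perhaps cleaner, presentation avoids dissecting $\Delta_A$: write $\psi = \Delta_A\xi$ with $\xi := G_A\psi$ in $\im(\partial\op)^*$ (this is forced because $\psi\perp\ker\Delta_A$ and $\psi\perp(\im\partial+\im\op)$ would need $\xi$ orthogonal appropriately — one checks $\xi\in\im(\partial\op)^*$ since $\psi\in\im\partial\op$ and the three summands are $\Delta_A$-invariant), so $\xi = (\partial\op)^*\zeta$ for some $\zeta$, and then $\psi = \Delta_A(\partial\op)^*\zeta$; finally one shows $\Delta_A(\partial\op)^* = \partial\op(\partial\op)^*$ on the image, whence $\psi = \partial\op(\partial\op)^*\zeta = \partial\op(\partial\op)^*G_A\psi = \partial\op\,G_A(\partial\op)^*\psi$. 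I would write it this second way.
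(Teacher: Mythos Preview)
Your proposal touches on the right ingredient --- the Aeppli Hodge decomposition --- but the two routes you actually pursue both contain genuine errors, and you abandon the one observation that would have worked.

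The correct (and much shorter) argument is the one you begin in your first paragraph and then drop: write $\psi=\partial\bar\partial\,\beta$ with $\beta$ chosen in the summand $\im(\partial\bar\partial)^*$ of the Aeppli decomposition at bidegree $(p,q)$. For such $\beta$ one has $\partial^*\beta=\bar\partial^*\beta=0$, and inspecting the six terms of $\Delta_A$ shows that five of them annihilate $\beta$, leaving
\[
\Delta_A\beta=(\partial\bar\partial)^*\partial\bar\partial\,\beta=(\partial\bar\partial)^*\psi\,.
\]
Since $\beta\perp\ker\Delta_A$, this gives $\beta=G_A\Delta_A\beta=G_A(\partial\bar\partial)^*\psi$, hence $\psi=\partial\bar\partial\,\beta=\partial\bar\partial\,G_A(\partial\bar\partial)^*\psi$. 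That is the paper's entire proof.

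Your first route instead relies on the claim that $\Delta_A$ (hence $G_A$) commutes with $\partial\bar\partial$. This is false on a general Hermitian manifold: the identity that actually holds is $\Delta_{BC}\,\partial\bar\partial=\partial\bar\partial\,\Delta_A$ (it is used later in the paper), so $\partial\bar\partial$ intertwines $\Delta_A$ with $\Delta_{BC}$, not with itself. Consequently $G_A\psi\neq\partial\bar\partial\,G_A\eta$, and your expansion of $\Delta_A$ on ``$\partial\bar\partial\,G_A\eta$'' does not compute $\psi$. Likewise the commutation $(\partial\bar\partial)^*G_A=G_A(\partial\bar\partial)^*$ you invoke at the end is not available.

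Your second route asserts that $\xi:=G_A\psi\in\im(\partial\bar\partial)^*$. But $\psi\in\im\partial\bar\partial\subset\im\partial+\im\bar\partial$, and since --- as you yourself note --- the three Aeppli summands are $\Delta_A$-invariant, $G_A$ preserves them too. Hence $\xi$ lies in $\im\partial+\im\bar\partial$, \emph{not} in $\im(\partial\bar\partial)^*$; you cannot write $\xi=(\partial\bar\partial)^*\zeta$, and the rest of the argument collapses. The fix is precisely to work at bidegree $(p,q)$ with the preimage $\beta$ rather than at bidegree $(p+1,q+1)$ with $G_A\psi$.
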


\begin{proof} Let $\psi\in \partial\bar{\partial}\Lambda^{p,q}$. Bearing in mind the Aeppli decomposition we can write
$$
\psi=\partial \op \beta\quad {\rm with} \quad
\beta\in {\rm Im}(\partial\op)^*\,.
$$
In particular we have
$$
\Delta_A\beta= (\partial \op)^*\partial\op \beta
$$
and consequently
$$
\beta=G_A((\partial \op)^*\partial\op \beta)=G_A((\partial \op)^*\psi)\,.
$$
Thus finally
$$
\psi=\partial \op G_A(\partial \op)^*\psi\,,
$$
as required.
\end{proof}

\subsection{Families of balanced structures}
Here we consider a smooth $1$-parameter family $\varphi$ in $C^\infty(M,\Lambda^{n-1,n-1}_+)$ and compute the derivative of $*_\varphi \varphi$ which will be useful when we will study flows of balanced structures.
\begin{lemma}\label{firstvariation}
Assume
$$
\frac{d}{dt}\varphi=h_1\,\varphi+*_\varphi h_0\,,
$$
whith $h_1\in C^{\infty}(M,\R)$ and $h_0\in C^{\infty}(M,\Lambda^{1,1}_0)$.
Then
$$
\frac{d}{dt}(*_\varphi \varphi)=\frac{h_1}{n-1}*_\varphi \varphi-h_0\,.
$$
\end{lemma}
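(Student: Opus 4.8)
The plan is to reduce the statement to a computation of the differential of the map $\mathcal{L}\colon\omega\mapsto *_\omega\omega$. Recall (Michelsohn) that $\mathcal{L}$ is a diffeomorphism of $\Lambda^{1,1}_+$ onto $\Lambda^{n-1,n-1}_+$ whose inverse is $\varphi\mapsto *_\varphi\varphi$; equivalently, setting $\omega_t:=*_{\varphi(t)}\varphi(t)$ we have
$$
\varphi(t)=*_{\omega_t}\omega_t=\frac1{(n-1)!}\,\omega_t^{\,n-1},\qquad *_{\varphi(t)}=*_{\omega_t}\,.
$$
Thus proving the lemma amounts to expressing $\frac{d}{dt}\omega_t$ in terms of the prescribed $\frac{d}{dt}\varphi(t)=h_1\,\varphi(t)+*_{\varphi(t)}h_0$.

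First I would differentiate the identity $\varphi(t)=\frac1{(n-1)!}\omega_t^{\,n-1}$ in $t$, obtaining
$$
\frac{d}{dt}\varphi(t)=\frac1{(n-2)!}\Bigl(\tfrac{d}{dt}\omega_t\Bigr)\wedge\omega_t^{\,n-2}\,.
$$
Then, writing the pointwise decomposition $\frac{d}{dt}\omega_t=a_1\,\omega_t+a_0$ dictated by \eqref{dec(1,1)}, with $a_1\in C^\infty(M,\R)$ and $a_0\in C^\infty(M,\Lambda^{1,1}_0)$, and substituting, Lemma \ref{star(1,1)} (used in the forms $\frac1{(n-2)!}\sigma\wedge\omega^{\,n-2}=-*_\omega\sigma$ for primitive $\sigma$, and $\frac1{(n-1)!}\omega^{\,n-1}=*_\omega\omega$) gives
$$
\frac{d}{dt}\varphi(t)=(n-1)\,a_1\,\varphi(t)-*_{\varphi(t)}a_0\,.
$$

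Finally I would compare this with the hypothesis $\frac{d}{dt}\varphi(t)=h_1\,\varphi(t)+*_{\varphi(t)}h_0$ using the splitting $\Lambda^{n-1,n-1}=\C\,\varphi(t)\oplus\Lambda^{n-1,n-1}_0$. Since $*_{\varphi(t)}$ maps $\Lambda^{1,1}_0$ isomorphically onto $\Lambda^{n-1,n-1}_0$ — for primitive $\sigma$ one has $(*_{\varphi(t)}\sigma)\wedge\omega_t=-\frac1{(n-2)!}\sigma\wedge\omega_t^{\,n-1}=0$, and $*_{\varphi(t)}$ is invertible — the two components must match separately: $(n-1)a_1=h_1$ and $-*_{\varphi(t)}a_0=*_{\varphi(t)}h_0$, whence $a_1=\frac{h_1}{n-1}$ and $a_0=-h_0$. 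Therefore $\frac{d}{dt}(*_\varphi\varphi)=\frac{d}{dt}\omega_t=\frac{h_1}{n-1}\,\omega_t-h_0=\frac{h_1}{n-1}*_\varphi\varphi-h_0$, which is the claim.

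The only genuinely delicate point I anticipate is the initial bookkeeping: identifying $*_\varphi\varphi$ with $\mathcal{L}^{-1}(\varphi)$ and verifying the compatibility $*_\varphi=*_{\mathcal{L}^{-1}(\varphi)}$ of the Hodge operators (which rests on Michelsohn's result that $\mathcal{L}$ is a diffeomorphism onto $\Lambda^{n-1,n-1}_+$, together with the fact that the double Hodge star is the identity on $2$-forms in real dimension $2n$). Once that is in place, the rest is pure linear algebra governed by Lemma \ref{star(1,1)} and the two orthogonal decompositions; in fact one can bypass $\mathcal{L}^{-1}$ altogether and simply differentiate $\varphi(t)=\frac1{(n-1)!}\omega_t^{\,n-1}$ as above.
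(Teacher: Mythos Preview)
Your proof is correct and is essentially the paper's own argument: both differentiate $\varphi=\frac1{(n-1)!}(*_\varphi\varphi)^{n-1}$, decompose $\frac{d}{dt}(*_\varphi\varphi)$ (your $\omega_t$) according to \eqref{dec(1,1)}, apply Lemma~\ref{star(1,1)}, and match components. The only differences are notational (your $a_1,a_0$ are the paper's $f_1,f_0$) and that you spell out more carefully why the two components can be identified separately.
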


\begin{proof}
We can write
$$
\frac{d}{dt}(* \varphi)=f_1*\varphi+f_0
$$
with $f_1\in C^\infty(M,\R)$ and $f _0\in C^{\infty}(M,\Lambda^{1,1}_0)$. (Here and in the rest of the proof the symbol $*$ will always stand for $*_\varphi$).
Recalling that
$$
\varphi=\frac{1}{(n-1)!}(*\varphi)^{n-1}
$$
we can compute
$$
\frac{d}{dt}\varphi=\frac{1}{(n-2)!} (*\varphi)^{n-2}\wedge \frac{d}{dt}(*\varphi)=\frac{1}{(n-2)!} (f_1*\varphi+f_0) \wedge (*\varphi)^{n-2}\,.
$$
Using lemma \ref{star(1,1)} we get
$$
\frac{d}{dt}\varphi=(n-1)f_1\varphi-*f_0\,,
$$
hence
$$
f_1=\frac{1}{n-1}h_1\,,\quad f_0=-h_0\,,
$$
as required.
\end{proof}

%

\section{Flows of closed sections of vector bundles}\label{sectiongeneral1}
The Laplacian flow has been introduced in \cite{Bryant} as a natural tool for studying closed $G_2$-structures. In \cite{BryantXu} Bryant and Xu proved that the solution to the Laplacian flow of  $G_2$-structures exists and is unique in a short interval of time. The proof of this result was obtained in \cite{BryantXu} by the following two steps:  \\
the first step consists in applying a sort of De Turk's trick to the Laplacian flow in order to have a new flow  which is  parabolic along the direction of closed $G_2$-structures and it is equivalent to the initial one;  the second step consists in applying some analytic techniques introduced by Hamilton for studying parabolic equations
in order to prove the result for the new flow.

\medskip
The aim of this section is to give a generalization of the second part of the Bryant-Xu proof mentioned above to the general case of a parabolic evolution equation associated to an operator acting on fiber bundles.
\subsection{Flows on compact manifolds}
Let us consider the following
\begin{definition}
A {\rm Hodge system} on a manifold $M$ consists of the following sequence
\begin{equation}\label{hodge complex}
\begin{CD}
C^{\infty}(M,E_-) @>D>>C^{\infty}(M,E) \\
@VV\Delta_DV\\
C^{\infty}(M,E_-) @<D^*<<C^{\infty}(M,E)
\end{CD}
\end{equation}
where $E_-$ and $E$  are fiber bundles over $M$ with an assigned metric along their fibers, $D$ is a differential operator, $D^*$ is the formal adjoint of $D$  and $\Delta_D$ is an elliptic operator such that
\begin{equation}\label{G}
\psi=DGD^*\psi
\end{equation}
for every $\psi\in {\rm Im}\,D$, where $G$ is the Green operator of $\Delta_D$.
\end{definition}

We have the foremost example of Hodge system taking $E_-=\Lambda^p$, $E=\Lambda^{p+1}$, $D=d$ and $\Delta_D=dd^*+d^*\!d$, the standard Laplace operator, on a compact Riemannian manifold. Condition \eqref{G} in this case is a consequence of standard Hodge theory.


\medskip
Consider a Hodge system on a compact manifold $M$ as in the previous definition. Let
$A$ be an open subset of $E$ such that $\pi(A)=M$, where $\pi\colon E\to M$ is the projection.
Consider a non-linear partial differential operator of order $2m$
$$
L\colon C^{\infty}(M,A)\to C^{\infty}(M,E)
$$
and a fixed initial datum $\varphi_0\in C^{\infty}(M,A)$ such that
\begin{equation}\label{compatibility}
L(\varphi_0+D\gamma)\in {\rm Im}\,D
\end{equation}
for every $\gamma\in C^{\infty}(M,E_-)$.
It will be convenient to think of $L$ as extended in the trivial way to time-dependent sections of $A$.
Then consider the evolution problem
\begin{equation}\label{flow}
\begin{cases}
& \frac{{\partial}}{{\partial}t}\varphi=L(\varphi)\\
& \varphi(0)=\varphi_0\,,
\end{cases}
\end{equation}
where the solution $\varphi(t)$ is sought  in the space
$$
U=\{\varphi_0+D\gamma \,\,:\,\,\gamma\in C^\infty(M,E_-)\}\,\cap C^{\infty}(M,A)
$$
and is required to depend smoothly on time.  Notice that
problem \eqref{flow} makes sense because of condition \eqref{compatibility}.
Let $\mathcal{D}^{2m}(E,E)$ denote the space of partial differential operators on $E$ of order $\leq 2m$. Recall that $\mathcal{D}^{2m}(E,E)$ can be seen as the space of smooth sections of a vector bundle (see e.g. \cite{HamiltonNash}) and that a linear partial differential operator $Q$ of order $2m$ is said to be {\em strongly elliptic} if its principal symbol $\sigma_Q(x,\xi)$ satisfies the following inequality
$$-\langle\sigma_Q(x, \xi)v, v\rangle_E \geq \lambda |\xi|^{2m}|v|^{2m} $$
for some positive constant $\lambda$ and for all $(x,\xi)\in TM$, $\xi \neq 0$ and $v \in E_x$ (The definition actually does not depend on the metric $\langle\cdot,\cdot\rangle_E$ along the fibers of $E$).
Here the principal symbol of $Q$ is defined by
$$
\sigma_Q(x,\xi)v = \frac{i^{2m}}{(2m)!} Q(f^{2m}u)(x)
$$
for an $f \in C^\infty(M)$ with $f(x) = 0$, $d_x f = \xi$ and $u \in C^\infty(E)$ with $u(x) = v$.

Finally we denote by $L_{*|\varphi}$ the derivative of the operator $L$ at $\varphi$.

Now we can state the following theorem where we denote by $L_{*|\varphi}$ the derivative of the operator $L$ at $\varphi$.

\begin{theorem}\label{general1}
Let $(E_-,E,D,\Delta_D)$ be a Hodge system on a compact Riemannian manifold $M$. Let $L$, $\varphi_0$ and $U$ be as above. Assume that there exists a nonlinear partial differential operator
$$
\tilde{L}\colon C^\infty(M,A)\to \mathcal{D}^{2m}(E,E)\,, \quad \varphi \mapsto \tilde{L}_\varphi
$$
such that
\begin{enumerate}
\vspace{0.1cm}
\item[1.] $\tilde L_\varphi$ is strongly elliptic for every $\varphi\in U;$

\vspace{0.1cm}
\item[2.] $L_{*|\varphi}(\psi)=\tilde L_{\varphi}(\psi)$ for every $\varphi\in U$ and $\psi\in DC^{\infty}(M,E_{-})$.
\end{enumerate}Assume further that
$$
L_{*|\varphi}(D\theta)=Dl_{\varphi}(\theta)\,.
$$
for every $\theta \in C^\infty(M,E_-)$, where $l_\varphi$ is a strongly elliptic linear differential operator on $E_-$. Then there exists $\epsilon > 0$ such that the system \eqref{flow} has a unique solution $\varphi\in C^\infty([0,\epsilon), U)$.
\end{theorem}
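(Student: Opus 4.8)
The plan is to reduce the evolution problem \eqref{flow} on the constrained space $U$ to an honest strongly parabolic PDE on the fibre bundle $E$, to which the standard short-time existence and uniqueness theory for quasilinear parabolic systems on compact manifolds (in the spirit of Hamilton \cite{HamiltonNash}, as used by Bryant and Xu \cite{BryantXu}) applies directly; the remaining work is to show that the resulting solution actually stays in $U$, i.e. that the constraint $\varphi(t)=\varphi_0+D\gamma(t)$ is automatically preserved. Throughout I write $\varphi=\varphi_0+\psi$ where a priori $\psi\in C^\infty(M,E)$ is only required to be small (so that $\varphi\in C^\infty(M,A)$), and the final goal is $\psi(t)\in\operatorname{Im}D$ for all $t$.

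\textbf{Step 1: the modified (parabolic) flow.} Consider the auxiliary Cauchy problem
\begin{equation}\label{auxflow}
\begin{cases}
& \dfrac{\partial}{\partial t}\psi=L(\varphi_0+\psi)+\bigl(\tilde L_{\varphi_0+\psi}(\psi)-L_{*|\varphi_0+\psi}(\psi)\bigr)\\
& \psi(0)=0\,,
\end{cases}
\end{equation}
for $\psi\in C^\infty(M,E)$ with $\varphi_0+\psi\in C^\infty(M,A)$. The point of adding the correction term is De Turck-type: by hypothesis $2$, whenever $\psi\in DC^\infty(M,E_-)$ the bracket vanishes, so \eqref{auxflow} agrees with \eqref{flow} on $U$; but as an equation on the \emph{unconstrained} space $E$ its linearization at $\varphi_0+\psi$ has principal part $\tilde L_{\varphi_0+\psi}$ (one has to check the correction term does not alter the symbol — it is $O(\psi)$ and its linearization in the direction of the full $\psi$ produces lower-order-in-$\psi$ but still potentially $2m$-order terms; here one uses that $\tilde L$ and $L_*$ have the \emph{same} symbol on $DC^\infty(M,E_-)$ and argues that the discrepancy on all of $E$ is controlled, or alternatively one adds the term $(\tilde L_{\varphi_0+\psi}-\tilde L_{\varphi_0})(\psi)$ so that the symbol is exactly $\sigma(\tilde L_{\varphi_0})$, which is strongly elliptic by hypothesis $1$). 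In any case the modified flow is a quasilinear strongly parabolic system of order $2m$ on a compact manifold, so Hamilton's implicit-function-theorem argument on the appropriate Hölder or Sobolev scale gives an $\epsilon>0$ and a unique smooth solution $\psi\in C^\infty([0,\epsilon),C^\infty(M,E))$ with $\varphi_0+\psi(t)\in C^\infty(M,A)$.

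\textbf{Step 2: the constraint is preserved.} This is the heart of the matter and the analogue of the argument that a solution of the Laplacian flow stays closed. Set $\psi(t)=D G D^*\psi(t)+r(t)$, decomposing via the Hodge system: by \eqref{G}, $\psi(t)\in\operatorname{Im}D$ iff $r(t)=0$, and in general $r(t)$ is the component of $\psi(t)$ orthogonal to $\operatorname{Im}D$ in a suitable sense (more precisely I would work with the component $\Pi\psi$ of $\psi$ in $(\operatorname{Im}D)^\perp$, characterized by $D^*\Pi\psi$ determining it, and aim to show $\Pi\psi\equiv 0$). Differentiating and using \eqref{auxflow}: $\partial_t\psi=L(\varphi_0+\psi)+(\text{correction})$. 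Now $L(\varphi_0+\psi)\in\operatorname{Im}D$ by the compatibility hypothesis \eqref{compatibility} \emph{provided $\psi\in\operatorname{Im}D$}, and the correction term vanishes on $\operatorname{Im}D$; so on the locus $\{\Pi\psi=0\}$ the right-hand side of \eqref{auxflow} lies in $\operatorname{Im}D$, giving $\partial_t(\Pi\psi)=0$ there. To turn this into "$\Pi\psi\equiv 0$" I would derive a closed linear parabolic evolution equation satisfied by $\Pi\psi$ (or by $D^*\psi$, or whatever finite-order quantity measures the failure of the constraint): writing $\Pi\psi=\psi-DGD^*\psi$ and differentiating in $t$, substitute \eqref{auxflow} and commute $D^*$ past $L_{*|\varphi}$ using the final hypothesis $L_{*|\varphi}(D\theta)=Dl_\varphi(\theta)$ — this is precisely what lets one close the equation for the constraint-violating part into a system whose principal symbol is (minus) a strongly elliptic operator built from $l_\varphi$, i.e. a linear strongly parabolic equation with zero initial data. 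Uniqueness for that linear parabolic system (energy estimate: $\frac{d}{dt}\|\Pi\psi\|^2 \le -c\|\Pi\psi\|^2_{H^{m}}+C\|\Pi\psi\|^2$, then Grönwall) forces $\Pi\psi\equiv 0$, hence $\psi(t)\in\operatorname{Im}D$ for all $t\in[0,\epsilon)$.

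\textbf{Step 3: conclusion.} By Step 2, $\varphi(t):=\varphi_0+\psi(t)\in U$ for all $t$, and there the correction term in \eqref{auxflow} vanishes, so $\varphi$ solves \eqref{flow}. For uniqueness within $U$: any two solutions of \eqref{flow} in $C^\infty([0,\epsilon),U)$ are in particular solutions of \eqref{auxflow} (the correction vanishes on $U$), and \eqref{auxflow} has a unique solution by Step 1; hence they coincide. This proves the theorem. The main obstacle is Step 2 — verifying that the constraint-violating component satisfies a genuinely parabolic closed equation requires careful bookkeeping of which hypotheses ($1$, $2$, \eqref{compatibility}, and $L_{*|\varphi}D\theta=Dl_\varphi\theta$) feed into which term, and in particular checking that no uncontrolled $2m$-order terms in $\Pi\psi$ appear with the wrong sign; a secondary technical point is making the parabolic existence in Step 1 rigorous on the nonlinear bundle $C^\infty(M,A)$, which is standard but must be set up on the right function-space scale.
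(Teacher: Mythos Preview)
Your approach differs substantially from the paper's, and Step~2 contains a genuine gap that the stated hypotheses do not close.

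The paper never leaves the constrained space. It defines
\[
F(\beta)=\Bigl(\tfrac{\partial}{\partial t}\beta-L(\varphi_0+\beta),\ \beta(0)\Bigr)
\]
on $\mathcal{U}\subset DC^\infty(M\times[0,T],E_-)$ and applies the Nash--Moser inverse function theorem directly there. The Hodge-system identity \eqref{G} is used precisely to exhibit $DC^\infty(M,E_-)$ as a \emph{tame direct summand} of $C^\infty(M,E)$ (via the projection $DGD^*$), which is what makes Nash--Moser available on this subspace. The three analytic hypotheses are then spent as follows: hypotheses~1 and~2 give injectivity of $F_{*|\beta}$ (the kernel equation extends to a linear parabolic system on all of $E$ with zero data, hence $\psi\equiv 0$); the hypothesis $L_{*|\varphi}(D\theta)=Dl_\varphi(\theta)$ with $l_\varphi$ strongly elliptic gives surjectivity, by solving $\partial_t\theta-l_{\varphi_0+\beta}(\theta)=\eta$ on $E_-$ and then applying $D$. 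Tame estimates for one-parameter families of linear parabolic systems make $VF$ smooth tame, and a Borel-series trick produces the short-time solution.

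Your Step~2, by contrast, requires information the hypotheses do not supply. The identity $L_{*|\varphi}\circ D=D\circ l_\varphi$ says only that the linearization carries $\operatorname{Im}D$ \emph{into} $\operatorname{Im}D$; it says nothing about how $L_{*|\varphi}$, $\tilde L_\varphi$, or $L$ itself behaves on a complement of $\operatorname{Im}D$, and \eqref{compatibility} is likewise stated only for arguments already in $U$. So when you project your auxiliary flow onto $(\operatorname{Im}D)^\perp$ you cannot identify the principal part of the resulting equation for $\Pi\psi$, let alone verify it is strongly parabolic with the correct sign. Your phrase ``commute $D^*$ past $L_{*|\varphi}$'' is exactly where this breaks: from $L_{*|\varphi}\circ D=D\circ l_\varphi$ one obtains no control over $D^*\circ L_{*|\varphi}$ or over $\Pi\circ\tilde L_\varphi\circ\Pi$. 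Bryant--Xu do not argue by constraint propagation either; their DeTurck step fixes the diffeomorphism gauge, but closedness of the $3$-form is handled, as here, by working in the subspace of closed forms via Nash--Moser. (Your Step~1 also has the loose end you flag: linearizing the correction term brings in the second variation of $L$, so the symbol of the auxiliary operator is not literally $\sigma(\tilde L_\varphi)$ away from $\psi=0$. That is repairable by openness of strong ellipticity; Step~2 is not, under these hypotheses.)
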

\subsection{The Laplacian flow for $G_2$-structures}
In order to clarify our notation, it is useful to describe the objects involved in proposition \ref{general1} in the
case of the Laplacian flow for $G_2$-structures studied in \cite{BryantXu}.

In the $G_2$-case one has the following Hodge system: a compact $7$-manifold $M$ admitting closed $G_2$-forms, $E_-=\Lambda^2$, $E=\Lambda^3$, the operator
$$
D=d\colon C^{\infty}(M,\Lambda^2)\to C^{\infty}(M,\Lambda^3)\,,
$$
$A$ is the open set of $\Lambda^3$ whose global sections are $G_2$-structures on $M$ and
$\varphi_0$ is a fixed closed section of $A$. Furthermore $U$ is the set of $G_2$-structures lying in the cohomology class of $\varphi_0$ and
$$
L\colon C^\infty(M,A)\to C^{\infty}(M,\Lambda^3)
$$
is the operator
$$
L(\varphi)=\Delta_{\varphi}\varphi+\mathcal{L}_{\xi(\varphi)}\varphi\,,
$$
where $\Delta_{\varphi}$ is the standard Hodge Laplacian operator of the metric induced by $\varphi$
and $\xi(\varphi)$ is a suitable vector field depending on the torsion of $\varphi$.
Finally
$$
\tilde L_{\varphi}=-\Delta_{\varphi}+ d\Phi_{\varphi}
$$
and
$$
l_{\varphi}=-\Delta_{\varphi}+\Phi_{\varphi}
$$
where $\varphi\in U$ and  $\Phi_\varphi$ is a suitable algebraic linear operator with coefficients depending on the torsion of $\varphi$ in a universal way.
\begin{rem}
{\em In the same way the modified coflow of $G_2$-structures considered in \cite{Gri} can be described in terms of the general setting of theorem \ref{general1}.}
\end{rem}

\section{Proof of theorem \ref{general1}}
We need some preliminaries about the function spaces that will be needed in the proof of theorem \ref{general1}.
\subsection{Tame Fr\'echet spaces and tame maps}
Here we recall some basic facts about tame Fr\'echet spaces and tame maps.
For a detailed description of these topics we refer to \cite{HamiltonNash} .

\smallskip
A {\em tame Fr\'echet space} is by definition a vector space $V$ endowed with an increasing countable family of seminorms $\{|\cdot|_n\}$. The family $\{|\cdot|_n\}$ gives a topology on $V$ by defining a sequence $\{x_n\}\subseteq V$ {\em convergent} if it converges in each seminorm. A continuous map $F\colon (V,|\cdot|_n)\to (W,|\cdot|'_n)$ between two tame Fr\'echet spaces is called {\em tame} if any $x\in V$ has a neighborhood $U_x$ such that
$$
|F(y)|'_n\leq C_n(1+|y|_{n+r})
$$
for every $y\in U_x$ and $n>b$. Here $b$,$r$ and $C_n$ are allowed to depend on the neighborhood. A differentiable map is called {\em smooth tame} if all its derivatives are tame maps.
The next result is known in the literature as the {\em Nash-Moser  theorem.}

\begin{theorem}[\bf{Nash-Moser}]
Let $\mathcal{F}$, $\mathcal{G}$ be tame Fr\'echet spaces and let  $\mathcal{U}$
be an open set of  $\mathcal{F}$. Consider a smooth tame map $F\colon \mathcal U\to \mathcal G$ such that the first variation of $F$, $F_{*|f}\colon \mathcal F\to \mathcal G$, is an isomorphism  for every $f\in\mathcal{U}$ and such that the map
$VF\colon \mathcal{U}\times \mathcal{G}\to \mathcal{F}$ defined as $VF(f,g)=F_{*|f}^{-1}g$
is smooth tame. Then $F$ is locally invertible with local inverses smooth tame.
\end{theorem}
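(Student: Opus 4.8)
The plan is to prove a quantitative local inverse function theorem by a Newton-type iteration adapted to the loss of derivatives, following the scheme of Nash and Moser as organized by Hamilton. The obstruction to the ordinary Newton method $f_{n+1}=f_n-F_{*|f_n}^{-1}(F(f_n)-g)$ is precisely that the hypothesis gives only \emph{tame} control of $VF$: inverting the linearization costs a fixed number $r$ of derivatives, and iterating compounds this loss so that no single seminorm stays bounded. The remedy is to exploit the tame structure of $\mathcal F$, which supplies a family of \emph{smoothing operators} $S_t\colon\mathcal F\to\mathcal F$ ($t\geq 1$) satisfying $|S_tx|_{n+k}\leq C t^{k}|x|_n$ and $|x-S_tx|_{n}\leq C t^{-k}|x|_{n+k}$, together with the logarithmic-convexity (interpolation) estimates $|x|_{m}\leq C|x|_{\ell}^{1-\lambda}|x|_{h}^{\lambda}$ for $m=(1-\lambda)\ell+\lambda h$. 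I would take these, together with the tame bounds on $VF$ furnished by the hypothesis, as the basic analytic input.

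Fixing $f_0\in\mathcal U$ and setting $g_0=F(f_0)$, I aim to solve $F(f)=g$ for $g$ near $g_0$. First I would define the smoothed iteration $f_{n+1}=f_n+h_n$ with $h_n=-S_{t_n}F_{*|f_n}^{-1}e_n$, where $e_n=F(f_n)-g$ is the residual and $t_n=t_0^{\kappa^n}$ grows doubly exponentially for a suitable $\kappa>1$. Expanding by Taylor's theorem, $e_{n+1}=e_n+F_{*|f_n}h_n+Q_n$ with $Q_n$ the quadratic remainder; writing $e_n=F_{*|f_n}F_{*|f_n}^{-1}e_n$ this rearranges to
\[
e_{n+1}=Q_n+F_{*|f_n}\,(\mathrm{Id}-S_{t_n})\,F_{*|f_n}^{-1}\,e_n\,.
\]
The two terms encode the competition at the heart of the method: $Q_n$ is quadratically small (hence excellent for convergence) but loses derivatives, while the commutator term is controlled by $\mathrm{Id}-S_{t_n}$ in low norms at the cost of a high norm of $e_n$.

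The technical core, and the step I expect to be the main obstacle, is the simultaneous a priori estimate propagated by induction on $n$. Fixing a base regularity $a$ and a high regularity $A$, I would show that an inductive hypothesis of the form
\[
|f_n-f_0|_a\leq\delta,\qquad |e_n|_a\leq t_n^{-\alpha},\qquad |e_n|_A\leq t_n^{\beta}
\]
is reproduced at step $n+1$, provided $\delta$ and $t_0^{-1}$ are sufficiently small. This rests on balancing the tame estimates for $VF$, the smoothing inequalities, and interpolation, so as to trade the growth in the high norm against the quadratic decay in the low norm; the exponents $\alpha,\beta$ and the rate $\kappa$ must be chosen compatibly so that $|e_n|_a\to 0$ while $|f_n|_A$ stays bounded for each fixed $A$. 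Granting the estimates, the increments $h_n$ are summable in every seminorm (by interpolating between the decaying low norm and the bounded high norm), so $f_n\to f$ in $\mathcal F$ with $f\in\mathcal U$, and $e_n\to 0$ yields $F(f)=g$; local uniqueness follows from the invertibility of $F_{*|f}$.

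Finally I would upgrade this to smooth tameness of the inverse. Tracking the dependence of the entire construction on $g$ yields tame estimates for the solution map $g\mapsto f$, so that $F^{-1}$ is tame. For differentiability, the candidate derivative of $F^{-1}$ at $g=F(f)$ is $F_{*|f}^{-1}=VF(f,\cdot)$; I would verify directly that $F^{-1}$ is $C^1$ tame with this derivative and then bootstrap: since $VF$ is \emph{smooth} tame by hypothesis and $F^{-1}$ is inductively $C^k$ tame, the composition $g\mapsto VF(F^{-1}(g),\cdot)$ is $C^k$ tame by the tame chain rule, whence $F^{-1}$ is $C^{k+1}$ tame. This gives that $F$ is locally invertible with smooth tame local inverse, as claimed.
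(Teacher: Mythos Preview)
The paper does not prove this theorem: it is quoted without proof in Section~4.1 as a standard tool, with a reference to Hamilton's survey \cite{HamiltonNash}, and is then applied in the proof of Theorem~\ref{general1}. So there is no ``paper's own proof'' to compare your proposal against.

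That said, your sketch is a faithful outline of the argument in \cite{HamiltonNash}: the smoothed Newton iteration, the competition between the quadratic remainder and the $(\mathrm{Id}-S_{t_n})$ commutator term, the inductive propagation of low-norm decay versus high-norm growth via interpolation, and the bootstrap to smooth tameness of the inverse using the hypothesis that $VF$ is smooth tame. As a high-level plan it is correct; the genuine work, as you acknowledge, lies in the choice of exponents $\alpha,\beta,\kappa$ and the combinatorics of the induction, which Hamilton carries out in full. For the purposes of this paper no such proof is expected---the theorem is simply invoked.
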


\subsection{Tame Fr\'echet spaces and sections of fibre bundles.} Here we introduce the tame Fr\'echet spaces we want to consider.
Let $M$ be a compact manifold and let $\pi\colon E\to M$ be a vector bundle over $M$ with a metric along its fibres. Then the space $C^{\infty}(M,E)$ of global smooth sections of $E$  inherits a natural structure of tame Fr\'echet space defining $|e|_n$ as the $L^2$ norm of $e$ and its covariant derivatives up to degree $n$.
Fix now a real number $T>0$ and consider the space of time-dependent partial differential operators $P\colon C^{\infty}(M \times [0,T],E)\to C^{\infty}(M \times [0,T],E)$
of degree less than or equal to $r$.  This space is tame Fr\'echet with respect to the grading
$$
|[P]|_n=\sum_{jr\leq n} [(\partial/\partial t)^{j}P]_{n-jr}
$$
where $[P]_n$ is the sup of the norm of $P$ and its space derivatives up to degree $n$.

Furthermore the space $C^{\infty}([0,T]\times M, E)$ of curves of smooth sections of $E$ is a tame Fr\'echet space with respect to the grading
\begin{equation}\label{grading}
\|e\|_n=\sum_{jr\leq n}|(\partial/\partial t)^{j}e|_{n-jr}\,,
\end{equation}
where for every time-dependent section $f$ of $E$ we put
$$
|f|_n^2=\int_{0}^{T}|f_t|_n^2\,dt\,
$$
and $f_t \in C^\infty(M,E)$ is defined by $f_t(x)=f(t,x)$.
Now we are ready to prove theorem \ref{general1}.

\begin{proof}[Proof of theorem $\ref{general1}$]
Let us put
$$
\mathcal{U}=\{\beta\in DC^{\infty}(M\times [0,T],\,E_{-})\,\,:\,\, \varphi_0+\beta(t)\in U \mbox{ for every }t\in [0,T]\}
$$
and consider the evolution problem
\begin{equation}
\label{betaflow}
\begin{cases}
\frac{\partial }{\partial t}\beta=L(\varphi_0+\beta)\\
\beta(0)=0\,.
\end{cases}
\end{equation}
If $\beta$ is a solution of \eqref{betaflow} then $\varphi=\varphi_0+\beta$ is a solution of \eqref{flow}.

Let us consider the map
$$
F\colon \mathcal{U}\to C^{\infty}(M\times [0,T],E)\times DC^{\infty}(M,E_{-})
$$
defined by
$$
F(\beta)=\left(\frac{\d}{\d t} \beta-L(\varphi_0+\beta),\beta(0) \right)\,.
$$
The proof mainly consists in the application of the Nash-Moser inverse theorem to $F$.
The derivative of $F$ at $\beta$ is the linear map
$F_{*|\beta}\colon \mathcal{F}\to \mathcal{G}$ given by
$$
F_{*|\beta}(\psi)=\left(\frac{\partial}{\partial t} \psi-L_{*|\varphi_0+\beta}(\psi),\psi(0)\right)\,,
$$
where
$$
\mathcal{F}=DC^{\infty}(M\times [0,T],E_-)
$$
and
$$
\mathcal{G}=\mathcal{F}\times DC^{\infty}( M,E_-)\,.
$$

\medskip
In order to apply the Nash-Moser theorem, we  have to first show that the spaces $\mathcal{F}$ and $\mathcal{G}$ are tame Fr\'echet with respect to the gradings $\|\cdot\|_n$ and $\|\cdot\|_n+|\cdot|_n$,   respectively.  Here is where condition \eqref{G} plays a role. Indeed, the  Green map $G$ of $\Delta_D$ is smooth tame in view of theorem 3.3.3 of \cite{HamiltonNash} and then $\mathcal{F}$ is a direct summand of $C^{\infty}(M\times [0,T],E_-)$ which is a tame Fr\' echet space with respect to $\|\cdot\|_n$. Therefore $\mathcal{F}$ is a tame Fr\' echet space being a direct summand of a tame Fr\' echet space (see \cite[lemma 1.3.3, p. 136]{HamiltonNash}). The same argument can be used to show that $DC^{\infty}( M,E_-)$ is a tame Fr\' echet space with respect to $|\cdot|$ which implies that $(\mathcal{G},\|\cdot\|_n+|\cdot|_n)$ is tame Fr\'echet.

Moreover, $F_{*|\beta}$ is bijective for every $\beta\in\mathcal U\,.$ This can be shown as follows:\\
Equation $F_{*|\beta}(\psi)=(0,0)$ is equivalent to
$$
\begin{cases}
& \frac{\partial}{\partial t} \psi=L_{*|\varphi_0 +\beta}(\psi)\\
& \psi(0)=0\,.
\end{cases}
$$
By hypothesis $L_{*|\varphi_0 +\beta}$ is the restriction of the strongly elliptic operator $\tilde L_{\varphi_0+\beta}$ on $C^{\infty}(M,E)$ and  therefore for every $\beta$ the only solution of the last system is $\psi\equiv 0$ by uniqueness of solutions to {\em linear} parabolic systems.

Now we show that $F_{*|\beta}$ is surjective. Equation $F_{*|\beta}(\psi)=(\chi,\chi_0)$ can be written as
\begin{equation}
\label{F-1}
\begin{cases}
& \frac{\partial}{\partial t} \psi-\tilde L_{\varphi_0+\beta}(\psi)=\chi\\
& \psi(0)=\chi_0\,.
\end{cases}
\end{equation}
We can write $\psi=D\theta$ and $\chi=D	\eta$, $\chi_0=D\eta_0$ so that system \eqref{F-1} writes as
$$
\begin{cases}
& \frac{\partial}{\partial t} D\theta-\tilde L_{\varphi_0+\beta}(D\theta)=D\eta\\
& D\theta(0)=D\eta_0\,.
\end{cases}
$$
Using hypothesis 3, this may be written as
$$
\begin{cases}
& \frac{\partial}{\partial t} D \theta-D l_{\varphi_0+\beta}(\theta)=D\eta\\
&D\theta(0)=D\eta_0\,.
\end{cases}
$$
Since $l_{\varphi_0+\beta}$ is by hypothesis strongly elliptic, the system
$$
\begin{cases}
& \frac{\partial}{\partial t}  \theta- l_{\varphi_0+\beta}(\theta)=\eta\\
&\theta(0)=\eta_0\,.
\end{cases}
$$
has a unique solution for every $\eta_0$.  This implies  that $F_{*|\beta}$ is surjective.

The next step consists in showing that the family of the inverses
$$
VF\colon \mathcal{U}\times \mathcal{G}\to \mathcal F
$$
defined as
$$
VF(\beta,\chi,\chi_0)=F_{*|\beta}^{-1}(\chi,\chi_0)
$$
is a smooth tame map.
Let $(\chi,\chi_0)\in \mathcal{G}$, then $F^{-1}_{|_*\beta}(\chi,\chi_0)$ is the unique solution to \eqref{F-1}. Since equation \eqref{F-1} is parabolic
we have
$$
VF(\beta,\chi,\chi_0)=S(\tilde L_{\varphi_0+\beta},\chi,\chi_0)
$$
for a map $S$  which is tame in the gradings $\|\cdot\|_n+|\cdot|$ on $(\chi, \chi_0)$ and
$|[\cdot ]|_n$ on $\tilde L_{\varphi_0+\beta}$. This fact is again a consequence of the ellipticity of $\widetilde{L}$ and will be proved in the next section.\\
Now we can write
$$
VF=S\circ (\tilde L\circ i \times {\rm Id})
$$
where $i$ is the translation
$\mathcal U\hookrightarrow C^{\infty}(M\times [0,T],A)\colon \beta \mapsto \varphi_0+\beta$ and ${\rm Id}\colon \mathcal G\to \mathcal G$ is the identity. Hence $VF$ is a smooth tame map since it is a composition of smooth tame maps. Now we can apply the Nash-Moser theorem to $F$ obtaining that $F$ is a locally invertible map whose local inverse is also a smooth tame map. This implies that if equation
$$
\begin{cases}
\frac{\partial }{\partial t}\beta-L(\varphi_0+\beta)=\bar \chi\\
\beta(0)=\bar \beta_0
\end{cases}
$$
has a solution $\beta\in \mathcal{U}$, then for every $(\chi,\beta_0)$ sufficiently close to
$(\bar \chi, \bar \beta_0)$ the equation
$$
\begin{cases}
\frac{\partial}{\partial t}\beta-L(\varphi_0+\beta)=\chi\\
\beta(0)=\beta_0
\end{cases}
$$
has a unique solution $\beta\in \mathcal{U}$. Now let us solve formally for the derivative $\frac{\partial^k}{\partial t^k}\beta(0)$ by differentiating through the equation.  Let $\bar\beta(t)$ be an element of $\mathcal{U}$ whose Taylor series expansion at $t = 0$ is given by $\frac{\partial^k}{\partial t^k}\beta(0)$. Such a $\bar \beta$ exists in view of a classical theorem of Borel (see e.g. \cite{hormander}, page 16). Then
$$
\bar{\chi }:=\frac{\partial}{\partial t}\bar\beta-L(\varphi_0+\bar \beta)
$$
has trivial Taylor series at $t=0$. Because of this fact $\bar \chi$ can be extended smoothly to $0$ backward in time. Hence
by translating $\bar \chi$ in $t$ we can find a map $\chi$ arbitrarily close to  $\bar \chi$ and vanishing in
a small neighborhood of $0$. In view of the Nash-Moser theorem there exists a solution  $\beta\colon [0,\epsilon)\to DC^{\infty}(M,E_{-1})$ to \eqref{betaflow}. Hence for $t$ small enough $\varphi(t)=\varphi_0+\beta(t)$ is  a solution to \eqref{flow}. Finally, if $\epsilon$ is fixed, then \eqref{betaflow} has at most one solution
$\beta\colon [0,\epsilon)\to DC^{\infty}(M,E_{-1})$ and system \eqref{flow} has a unique solution defined in a maximal interval.
\end{proof}

\subsection{Tame smoothness of families of solutions}\label{tame}
Let $M$ be a compact manifold and let $E$ be a smooth vector bundle over $M$. Let
$P\colon C^{\infty}(M\times [0,T],E)\to C^{\infty}(M\times [0,T],E)$ be a smooth family of strongly elliptic partial differential operators of order $2m$ involving only space derivatives whose coefficients are smooth functions of both space and time. Consider the {\em parabolic} equation
\begin{equation}\label{basic}
\frac{\partial}{\partial t} f=P(f)+h\,.
\end{equation}
where $h$ is a smooth section of $E$ depending on $t$. It is well known that equation \eqref{basic} has a unique solution $f \in C^{\infty}(M\times [0,T],E)$ having initial condition $f_0$ at $t=0$: we denote it by
$$
f=S(P,h,f_0)\,.
$$

\medskip
The goal of this section is to justify the following proposition which plays a major role in the proof of theorem \ref{general1}:

\begin{prop}\label{tame}
Let
$
f=S(P,h,f_0)
$
be the solution of equation \eqref{basic}  having initial condition $f_0$ at $t=0$.
In the open set where $P$ is strongly elliptic the solution $S$ is a smooth tame map in the gradings
$\|\cdot\|_n$  on $f$, $h$ and $|\cdot |_n$ on $f_0$ and $|[\cdot]|_n$  on $P$.
\end{prop}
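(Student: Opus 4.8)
The plan is to reduce Proposition \ref{tame} to the standard Schauder/parabolic theory for a single strongly elliptic equation and then carefully track how the solution operator depends tamely on the data, following Hamilton's treatment in \cite{HamiltonNash}. First I would recall that for a \emph{fixed} smooth family of strongly elliptic operators $P$ of order $2m$, equation \eqref{basic} is parabolic and has a unique smooth solution $f=S(P,h,f_0)$ on $M\times[0,T]$ with $f|_{t=0}=f_0$; this is classical (Hamilton proves exactly the estimate we need in \cite[\S II.2--II.3]{HamiltonNash}, but in any case the existence is standard linear parabolic theory on a compact manifold, since $M$ has no boundary). The whole content of the proposition is therefore the \emph{tameness} of $(P,h,f_0)\mapsto S(P,h,f_0)$ in the stated gradings.

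The key analytic input is a \emph{tame a priori estimate} for the linear parabolic equation: there exist constants $r$, $b$ and $C_n$ (depending only on a neighborhood of a given $P$ in the space of strongly elliptic operators, in particular on its ellipticity constant $\lambda$ and a bound on finitely many of its coefficient seminorms) such that
\begin{equation}\label{tameest}
\|S(P,h,f_0)\|_n\leq C_n\big(1+|[P]|_{n+r}\big)\big(\|h\|_{n+r}+|f_0|_{n+r}\big)
\end{equation}
for all $n>b$. The way to get \eqref{tameest} is the standard bootstrap for parabolic equations: multiply the equation by $f$ and its spatial covariant derivatives, integrate by parts over $M$, use strong ellipticity (Gårding's inequality) to absorb the top-order term, and use interpolation (Ehrling/Peter--Paul) to dominate the lower-order error terms --- in which derivatives of the coefficients of $P$ appear --- by $|[P]|_{n+r}$ times lower Sobolev norms of $f$, which are then controlled inductively. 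Because differentiating the equation in $t$ costs $2m$ spatial derivatives, the mixed grading \eqref{grading} with $r=2m$ is exactly the bookkeeping device that makes the time-derivative estimates fit into the same scheme; one differentiates \eqref{basic} in $t$ repeatedly, noting $\partial_t^{j}f$ solves a parabolic equation of the same type with a forcing term built from $\partial_t^{k}P$ ($k<j$) applied to $\partial_t^{k}f$ and from $\partial_t^{j}h$, and feeds these into \eqref{tameest} inductively in $j$. Smoothness of $S$ (i.e.\ tameness of all its derivatives) follows because $S$ is linear in $(h,f_0)$ and its derivative in the $P$-direction, $S_{*|P}(Q)(h,f_0)= -\,S(P,\,Q\,S(P,h,f_0),\,0)$, is again of the same form, so the estimate \eqref{tameest} applied twice gives tameness of $DS$, and one iterates.

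The main obstacle --- really the only subtle point --- is making the constants in \eqref{tameest} genuinely \emph{uniform} over a neighborhood of $P$ and keeping track of exactly how many coefficient-derivatives of $P$ enter at level $n$, so that the loss of derivatives $r$ is finite and independent of $n$; this is precisely the kind of estimate established in \cite{HamiltonNash} for the scalar heat equation and its perturbations, and here it must be run for sections of the vector bundle $E$, replacing ordinary derivatives by a fixed background connection (the commutator terms produced by curvature are lower order and are absorbed by interpolation just like the coefficient terms). Once \eqref{tameest} and its $t$-differentiated and $P$-differentiated versions are in hand, the proposition follows directly from the definition of a smooth tame map. I would therefore structure the write-up as: (i) recall existence/uniqueness of $S$; (ii) state and prove the tame energy estimate \eqref{tameest} for the base equation by induction on $n$; (iii) upgrade to the mixed space--time grading by induction on the number of $t$-derivatives; (iv) conclude tameness of $S$ and, by the linearity/recursion above, of all its derivatives, hence smooth tameness, citing \cite{HamiltonNash} for the parts that are verbatim the same as Hamilton's.
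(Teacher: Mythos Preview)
Your outline is correct and follows the same overall architecture as the paper: a base a priori parabolic estimate, an induction on spatial regularity, then an induction on time derivatives exploiting that $\partial_t$ costs $2m$ space derivatives in the grading \eqref{grading}, and finally smoothness of $S$ by differentiating the solution map. The paper does exactly this, citing Theorem~\ref{apriori} (the basic $|f|_{2m}\le C(|h|_0+|f_0|_m)$ estimate), then Lemma~\ref{7.4} and Lemma~\ref{key} for the spatial induction, and a last lemma for time derivatives.

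Where the execution differs: for the spatial step you propose a direct energy argument (pair with $f$, integrate by parts, G{\aa}rding, interpolation), leading to a product-form tame estimate like your \eqref{tameest} with a fixed loss $r$ on all inputs. The paper instead uses Hamilton's ``freeze and perturb'' device: one fixes $\bar P$, writes $f$ as the solution of the $\bar P$-equation with forcing $(P-\bar P)f+h$ to get uniformity of constants near $\bar P$, and then for the induction observes that $\nabla_v f$ solves the \emph{same} parabolic equation with forcing $(\nabla_v P)f+\nabla_v h$, so the base estimate can be re-applied and combined with interpolation inequalities of the type $[P]_1[P]_k\le C[P]_{k+1}$. This yields the sharper linear-in-$P$ estimate
\[
|f|_{k+2m}\le C\big(|h|_k+|f_0|_{k+m}\big)+C\,[P]_k\big(|h|_0+|f_0|_m\big),
\]
which is exactly Hamilton's standard tame form and is slightly cleaner to iterate for the time-derivative step and for the $P$-derivatives of $S$. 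Your cruder product estimate still suffices for tameness, so the conclusion is the same; the paper's route simply avoids redoing the G{\aa}rding argument at each level by bootstrapping off a single black-box estimate. (Minor point: your formula for the $P$-derivative of $S$ should have no minus sign, $S_{*|P}(Q)=S(P,\,Q\,S(P,h,f_0),\,0)$.)
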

For $m=1$, proposition \ref{tame} is a consequence of theorem 6.2 of \cite{positive} (see also \cite{BryantXu}). For $m>1$  things work much in the same way and the proof can be obtained combining the standard theory of parabolic equations with some devices described in \cite{HamiltonNash}. Anyway, for reader's convenience, we write down some details in order to show how things work, following the Hamilton proof of theorem 6 of \cite{positive}.

The starting point is the following standard a priori estimate for the solution of the evolution equation \eqref{basic} (see for example \cite{Polden} for the scalar case).\\
\begin{theorem}\label{apriori}
If $f=S(P,h,f_0)$ there exists a constant $C$ depending on $P$ such that
$$
|f|_{2m}\leq C \left( |h|_0 + |f_0|_{m}\right)\,.
$$
\end{theorem}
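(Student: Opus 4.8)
The plan is to establish the estimate by the classical energy method for linear parabolic systems. Since equation \eqref{basic} is already known to possess a unique smooth solution $f=S(P,h,f_0)$, there is no existence issue and no approximation scheme is needed: one works directly with the smooth $f$, so that all integrations by parts below are legitimate. Here $|\cdot|_k$ denotes the spatial Sobolev norm of a time-slice, as in the definition of the gradings, and the constant $C$ is allowed to depend on $P$ through its ellipticity constant $\lambda$ and finitely many sup-norms of the coefficients of $P$ and their spatial derivatives over the compact set $M\times[0,T]$, but on nothing else. The single structural ingredient is G\aa rding's inequality for the strongly elliptic family $P_t$: there are constants $c>0$ and $\Lambda\ge 0$, uniform in $t\in[0,T]$, such that
\[
-\,\mathrm{Re}\,\langle P_t\phi,\phi\rangle_{L^2(M)}\;\ge\; c\,|\phi|_m^2-\Lambda\,|\phi|_0^2\qquad\text{for every }\phi\in C^\infty(M,E),\ t\in[0,T].
\]
This follows in the standard way from the symbol inequality in the definition of strong ellipticity --- freezing coefficients, applying Plancherel to the resulting constant-coefficient model operator, and patching with a partition of unity on $M$ --- the constants being uniform in $t$ because the coefficients are smooth and $[0,T]$ is compact.

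First I would test the equation against $f$. From $\tfrac12\tfrac{d}{dt}|f_t|_0^2=\mathrm{Re}\,\langle P_tf_t,f_t\rangle+\mathrm{Re}\,\langle h_t,f_t\rangle$, G\aa rding's inequality and Young's inequality give $\tfrac{d}{dt}|f_t|_0^2+c\,|f_t|_m^2\le C(|f_t|_0^2+|h_t|_0^2)$, whence Gronwall's lemma followed by integration in $t$ yields the basic estimate
\[
\sup_{t\in[0,T]}|f_t|_0^2+\int_0^T|f_t|_m^2\,dt\;\le\;C\Big(|f_0|_0^2+\int_0^T|h_t|_0^2\,dt\Big).
\]

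Next I would run the analogous estimate at each differentiation order $k=1,\dots,m$. Writing $\nabla^k\circ P_t=\widetilde P_t\circ\nabla^k+R_t$, where $\widetilde P_t$ is the strongly elliptic operator of order $2m$ with the same principal symbol as $P_t$, now acting on sections of $(T^*M)^{\otimes k}\otimes E$, and $R_t$ has order at most $2m+k-1$, applying $\nabla^k$ to \eqref{basic} gives $\partial_t(\nabla^k f)=\widetilde P_t(\nabla^k f)+R_t f+\nabla^k h$. Pairing with $\nabla^k f$ and using G\aa rding for $\widetilde P_t$ produces a good term controlling $|\nabla^{m+k}f_t|_0^2$ modulo lower-order, absorbable corrections; the term $\langle\nabla^k h,\nabla^k f\rangle$ is integrated by parts $k$ times (generating only lower-order curvature contributions) and bounded by $|h_t|_0\,|f_t|_{2k}\le|h_t|_0\,|f_t|_{m+k}$, which is legitimate precisely because $k\le m$; and the term $\langle R_t f,\nabla^k f\rangle$ is controlled by redistributing derivatives via integration by parts so that the two surviving factors carry at most $m+k$ and $m+k-1$ derivatives, after which Cauchy--Schwarz together with the interpolation inequalities $|f_t|_s^2\le\varepsilon\,|f_t|_{m+k}^2+C_\varepsilon\,|f_t|_0^2$ for $0\le s<m+k$ absorb everything into the good term and $|f_t|_0^2$. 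Summing the resulting differential inequalities over $k=0,\dots,m$ gives
\[
\frac{d}{dt}\sum_{k=0}^m|f_t|_k^2+c'\,|f_t|_{2m}^2\;\le\;C\big(|f_t|_0^2+|h_t|_0^2\big).
\]
Integrating this in $t$, using $|f_0|_k\le|f_0|_m$, and then eliminating $\int_0^T|f_t|_0^2\,dt$ by the basic estimate, one obtains $\int_0^T|f_t|_{2m}^2\,dt\le C\big(|f_0|_m^2+\int_0^T|h_t|_0^2\,dt\big)$; taking square roots and using $\sqrt{a+b}\le\sqrt a+\sqrt b$ gives exactly $|f|_{2m}\le C(|h|_0+|f_0|_m)$.

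I expect the genuine difficulty to lie not in any single computation but in the bookkeeping of the third step: one must verify that, after the integrations by parts in the terms $\langle R_t f,\nabla^k f\rangle$ and $\langle\nabla^k h,\nabla^k f\rangle$, the total number of derivatives shared by the two surviving factors never exceeds $2(m+k)$, so that interpolation is always available to trade an intermediate Sobolev norm for $\varepsilon\,|f_t|_{m+k}^2$ plus $C_\varepsilon\,|f_t|_0^2$; this balance is precisely what forces the norms $|h|_0$ and $|f_0|_m$ --- and not stronger ones --- on the right-hand side. The one other slightly delicate point is G\aa rding's inequality for systems with $t$-uniform constants, which is classical but not entirely immediate. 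Beyond these, the argument is the standard parabolic energy cascade, carried one step past the second-order case treated by Hamilton in \cite{positive}.
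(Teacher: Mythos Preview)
The paper does not actually supply a proof of this theorem: it quotes the estimate as a ``standard a priori estimate'' and refers the reader to \cite{Polden} for the scalar case, then uses it as a black box in the subsequent lemmas. So there is no proof in the paper to compare against.

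Your argument is the standard one and is correct. The energy cascade you describe---G\aa rding's inequality at level $0$, then commuting $\nabla^k$ through $P_t$ for $k=1,\dots,m$, redistributing derivatives so that each surviving factor carries at most $m+k$ of them, interpolating, and summing---is exactly how the higher-order analogue of Hamilton's second-order estimate is proved in the references the paper points to. Your identification of the two nontrivial points (the $t$-uniform G\aa rding inequality for systems, and the derivative bookkeeping that forces precisely $|h|_0$ and $|f_0|_m$ on the right) is accurate; neither is a gap, both are routine once stated. In short, you have written out what the paper leaves to the literature.
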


Fix now an elliptic operator $\ov{P}$ and consider the operators $P$ in a neighborhood $[P-\ov{P}]_0\leq \delta$. If $\delta >0$ is small enough then $P$ is again strongly elliptic.
\begin{lemma}\label{7.4}
If $\delta >0$ is small enough, then for all $P$ such that $[P-\ov{P}]_0\leq \delta$ the solution of \eqref{basic} satisfies the estimates
$$
|f|_{2m}\leq C\left(|h|_0+|f_0|_{m}\right)
$$
for a constant $C$ independent of $P$.
\end{lemma}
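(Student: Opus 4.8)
The plan is to derive the estimate for a general $P$ near $\ov{P}$ by treating $P$ as a perturbation of $\ov{P}$ and absorbing the difference into the known a priori estimate for $\ov{P}$ provided by Theorem \ref{apriori}. First I would write the equation $\frac{\partial}{\partial t}f = P(f)+h$ in the form $\frac{\partial}{\partial t}f = \ov{P}(f) + \big((P-\ov{P})(f) + h\big)$, so that $f = S(\ov{P}, \tilde h, f_0)$ with $\tilde h := (P-\ov{P})(f) + h$. Applying Theorem \ref{apriori} to the fixed operator $\ov{P}$ gives a constant $C_0 = C_0(\ov{P})$, crucially independent of $P$, such that
$$
|f|_{2m} \leq C_0\big(|\tilde h|_0 + |f_0|_m\big) \leq C_0\big(|(P-\ov{P})(f)|_0 + |h|_0 + |f_0|_m\big)\,.
$$
The point is now to control $|(P-\ov{P})(f)|_0$. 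Since $P-\ov{P}$ is a differential operator of order $\le 2m$ whose coefficients have $[\,\cdot\,]_0$-norm bounded by $\delta$, one gets $|(P-\ov{P})(f)|_0 \leq C_1\,\delta\,|f|_{2m}$ for a constant $C_1$ depending only on $M$, $E$ and the order $2m$ (this is just the boundedness of a differential operator of order $2m$ from the $|\cdot|_{2m}$-norm to the $|\cdot|_0$-norm, with operator norm controlled by the sup-norm of the coefficients).

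Combining the two displays yields $|f|_{2m} \leq C_0 C_1 \delta\, |f|_{2m} + C_0\big(|h|_0 + |f_0|_m\big)$. Choosing $\delta$ small enough that $C_0 C_1 \delta \leq \tfrac12$ — a choice that depends only on $\ov{P}$ (through $C_0$) and on the fixed geometric data (through $C_1$), hence is uniform over the admissible $P$ — we may absorb the first term on the right into the left-hand side, obtaining
$$
\tfrac12\,|f|_{2m} \leq C_0\big(|h|_0 + |f_0|_m\big)\,,
$$
that is, $|f|_{2m} \leq 2C_0\big(|h|_0 + |f_0|_m\big)$. Setting $C := 2C_0$ gives the claim, with $C$ independent of $P$ in the ball $[P-\ov{P}]_0 \leq \delta$.

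The main obstacle — really the only subtle point — is making sure that the constant $C_1$ bounding $|(P-\ov{P})f|_0$ in terms of $\delta\,|f|_{2m}$ is genuinely independent of $P$, and that Theorem \ref{apriori} is being invoked only for the \emph{fixed} operator $\ov{P}$ so that its constant $C_0$ does not drift as $P$ varies. Both are fine: the former is a purely algebraic statement about how a differential operator's $L^2\to L^2$ norm is controlled by the sup-norm of its coefficients (here encoded in $[P-\ov{P}]_0 \le \delta$), and the latter is exactly the structure of the argument above. One should also remark that the term $|(P-\ov{P})(f)|_0$ involves at most $2m$ spatial derivatives of $f$, so only $|f|_{2m}$ — not a higher norm — appears, which is precisely why the absorption step closes; if the perturbation had higher order this argument would fail and one would need the full interpolation machinery instead.
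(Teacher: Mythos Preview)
Your argument is correct and follows exactly the paper's own route: rewrite $f$ as a solution of $\partial_t f=\ov{P}f+(P-\ov{P})f+h$, apply Theorem~\ref{apriori} for the fixed operator $\ov{P}$, bound $|(P-\ov{P})f|_0$ by $[P-\ov{P}]_0\,|f|_{2m}\le\delta\,|f|_{2m}$, and absorb for $\delta$ small. You have in fact made the absorption step and the independence of the constants more explicit than the paper does.
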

\begin{proof}
If $f=S(P,h,f_0)$, then $f$ is also a solution of the evolution equation
$$
\begin{cases}
\frac{\partial}{\partial t}g=\ov{P}g+(P-\ov{P})f+h\\
g(0)=f_0
\end{cases}
$$
Applying theorem \ref{apriori} to the fixed operator $\ov{P}$ we have
$$
\begin{aligned}
|f|_{2m}\leq & C\left(|(P-\ov{P})f+h|_0+|f_0|_{m}\right)\leq C\left(|(P-\ov{P})f|_0+|h|_0+|f_0|_{m}\right)\\
        \leq & C \left([P-\ov P]_0|f|_m+|h|_0+|f_0|_{m}\right)
\end{aligned}
$$
which implies the result.
\end{proof}
\begin{lemma}\label{key} For all solutions $f=S(P,h,f_0)$ with $P$ in the $\delta$-neighborhood above and every $k \geq 0$ there exists a constant $C$ such that
$$
|f|_{k+2m}\leq C\left(|h|_k+|f_0|_{k+m}\right)+C\,[P]_k \left( |h|_0+|f_0|_{m}\right)\,.
$$
\end{lemma}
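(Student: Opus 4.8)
The plan is to proceed by induction on $k$, the case $k=0$ being precisely Lemma \ref{7.4} (the extra summand $C\,[P]_0(|h|_0+|f_0|_m)$ is harmless there, since $[P]_0\le[\ov{P}]_0+\delta$ is bounded on the chosen neighbourhood). So fix $k\ge 1$ and assume the estimate is known for all smaller indices. Given the (automatically smooth) solution $f=S(P,h,f_0)$ of \eqref{basic}, I would differentiate the equation by an arbitrary space-derivative $D^\mu$ with $|\mu|=k$; since $P$ involves only spatial derivatives this gives
$$
\frac{\partial}{\partial t}(D^\mu f)=P(D^\mu f)+[D^\mu,P]f+D^\mu h,\qquad (D^\mu f)(0)=D^\mu f_0,
$$
that is, $D^\mu f=S\bigl(P,[D^\mu,P]f+D^\mu h,D^\mu f_0\bigr)$. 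Applying Lemma \ref{7.4} to each of these equations and summing over $|\mu|=k$ — using $|f|_{k+2m}\le C\sum_{|\mu|=k}|D^\mu f|_{2m}+C|f|_{k+2m-1}$, $|D^\mu f_0|_m\le|f_0|_{k+m}$ and $|D^\mu h|_0\le|h|_k$ — yields
$$
|f|_{k+2m}\le C\Bigl(|h|_k+|f_0|_{k+m}+\sum_{|\mu|=k}\bigl|[D^\mu,P]f\bigr|_0+|f|_{k+2m-1}\Bigr).
$$

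The heart of the matter is the estimate of the commutator terms $|[D^\mu,P]f|_0$. Expanding $[D^\mu,P]f$ by the Leibniz rule, it is a finite sum of terms $(D^\beta a)(D^\gamma f)$, with $a$ a coefficient of $P$, where $1\le|\beta|\le k$, $|\gamma|\le k+2m-1$ and $|\beta|+|\gamma|\le k+2m$ (at least one derivative always falls on a coefficient, and at most $k$ of them do). I would bound each such term in the $L^2$-norm by H\"older's inequality followed by the Gagliardo--Nirenberg interpolation inequalities on the compact manifold $M$, combined with the multiplicative interpolation for the scales $[P]_\bullet$ and $|f|_\bullet$ and Young's inequality. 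The net effect — which is exactly Hamilton's argument in \cite{positive}, carried out for general order in \cite{HamiltonNash} — is an estimate of the form
$$
\bigl|[D^\mu,P]f\bigr|_0\le\varepsilon\,|f|_{k+2m}+C_\varepsilon\,[P]_k\bigl(|h|_0+|f_0|_m\bigr),
$$
valid for every $\varepsilon>0$ with $C_\varepsilon$ depending only on $\ov{P}$, $\delta$ and $k$: the point is that the total derivative count $|\beta|+|\gamma|\le k+2m$ forces any term carrying the top coefficient norm $[P]_k$ to have at most $2m$ derivatives on $f$, hence to be controlled by $[P]_k|f|_{2m}\le C\,[P]_k(|h|_0+|f_0|_m)$ through the base case; while every term with more derivatives on $f$ carries only a bounded coefficient norm and is absorbed into $\varepsilon\,|f|_{k+2m}$ up to lower-order contributions, which are in turn controlled by Lemma \ref{7.4} and by the induction hypothesis.

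Plugging this into the displayed inequality, choosing $\varepsilon$ small enough (again depending only on $\ov{P}$, $\delta$, $k$) to absorb $\varepsilon\,|f|_{k+2m}$ into the left-hand side, and treating the remaining $|f|_{k+2m-1}$ either through the case $k-1$ of the lemma or through the interpolation $|f|_{k+2m-1}\le\eta\,|f|_{k+2m}+C_\eta\,|f|_0$ together with Lemma \ref{7.4}, gives exactly
$$
|f|_{k+2m}\le C\bigl(|h|_k+|f_0|_{k+m}\bigr)+C\,[P]_k\bigl(|h|_0+|f_0|_m\bigr).
$$
The main obstacle is the commutator estimate: one has to keep careful track of how the interpolation inequalities redistribute derivatives, so as to pair the uncontrolled high-order coefficient norm $[P]_k$ only with low-order ($\le 2m$) norms of $f$ — which the base case Lemma \ref{7.4} converts into data — and to play off the small multiple $\varepsilon\,|f|_{k+2m}$ against everything else. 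Once this bookkeeping is in place the result follows by a routine combination of the a priori estimate (Theorem \ref{apriori}), Lemma \ref{7.4} and standard interpolation, exactly as in the second-order case treated by Hamilton.
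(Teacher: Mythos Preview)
Your argument is correct and proves the lemma, but it is organized differently from the paper. The paper inducts from $k$ to $k+1$ by applying a \emph{single} covariant derivative $\nabla_v$ (for each $v$ in a finite set of vector fields spanning $TM$), observing that $\nabla_v f$ solves $\partial_t(\nabla_v f)=P(\nabla_v f)+(\nabla_v P)f+\nabla_v h$ with initial datum $\nabla_v f_0$, and then applying the full \emph{inductive hypothesis at level $k$} to $\nabla_v f$; the first-order product $(\nabla_v P)f$ is then handled by the bilinear interpolation inequality $|(\nabla_v P)f|_k\le C([P]_1|f|_{k+2m}+[P]_{k+1}|f|_{2m})$ together with $[P]_1[P]_k\le C[P]_0[P]_{k+1}$. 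You instead take all $k$ derivatives at once and jump straight back to the base case Lemma~\ref{7.4}, at the price of a higher-order commutator $[D^\mu,P]$ which you control via the multiplicative interpolation $[P]_j|f|_{k+2m-j}\le C([P]_0|f|_{k+2m})^{(k-j)/k}([P]_k|f|_{2m})^{j/k}$ and Young's inequality with $\varepsilon$. Both routes are standard: the paper's one-step induction keeps each commutator first order but re-invokes the hypothesis on a modified right-hand side, while your approach has a cleaner inductive structure at the cost of a more elaborate single commutator estimate. One small technical caveat: on a manifold there is no global multi-index derivative $D^\mu$, so your argument should be phrased---as the paper does---via iterated covariant derivatives along a finite spanning family of vector fields on the compact $M$.
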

\begin{proof}
The proposition is obtained by induction on $k$. The starting step  $k=0$ is
$$
|f|_{2m}\leq (C+C\,[P]_0) \left( |h|_0+|f_0|_{m}\right)
$$
which is ensured by lemma \ref{7.4}. Assume that the estimate holds up to some  $k$ and let $f$ be a solution to \eqref{basic} with initial condition $f_0$ at $t=0$. We will let $\sum_v$ denote the sum over a finite number of vector fields which span the tangent space at each point of $M$. This is possible since $M$ is supposed to be compact. Now choosing an arbitrary connection $\N$ on $E$ we have
$$
\begin{aligned}
|f|_{k+2m+1}\leq& C\sum_v|\nabla_vf|_{k+2m}+|f|_{k+2m}\\
                     \leq& C\sum_v|\nabla_vf|_{k+2m}+C\left(|h|_k+|f_0|_{k+m}\right)+C\,[P]_k \left( |h|_0+|f_0|_{m}\right)\\
 \leq& C\sum_v|\nabla_vf|_{k+2m+1}+C \left(|h|_{k+1}+|f_0|_{k+1+m}\right)\\
 & + C\,[P]_{k+1} \left( |h|_0+|f_0|_{m}\right)\,.
 \end{aligned}
$$
Therefore in order to prove the statement, we need to estimate the space derivatives of $f$.
For every vector field $v$, the section $\nabla_v f$ gives the solution to the parabolic evolution equation
$$
\begin{cases}
\frac{\partial}{\partial t}  g=P g+(\nabla_v P)f+\nabla_vh\\
g(0)=\nabla_vf_0
\end{cases}
$$
and by the inductive assumption we have
\begin{multline*}
|\nabla_vf|_{k+2m}\leq C\left(|(\nabla_v P)f+\nabla_vh|_k+|\nabla_v f_0|_{k+m}\right)+C\,[P]_k \left( |(\nabla_v P)f+\nabla_vh|_0+|\nabla_vf_0|_{m}\right)\,.
\end{multline*}
Using the following interpolation estimates
\begin{eqnarray*}
&& |(\nabla_v P)f|_k \leq  C([P]_{1}|f|_{k+2m}+[P]_{k+1}|f|_{2m})\,;\\
&& [P]_1[P]_k\leq C [P]_0[P]_{k+1}\leq C [P]_{k+1}\,;\\
&& [P]_k(|h|_1+|f_0|_{m+1})\leq C[P]_{k+1}(|h|_0+|f_0|_{m})+C[P]_0(|h|_{k+1}+|f_0|_{k+m+1});
\end{eqnarray*}
we reduce to
\begin{multline}\l{spesa}
|\nabla_vf|_{k+2m}\leq  C\big([P]_{1}|f|_{k+2m}+[P]_{k+1}|f|_{2m}+|h|_{k+1}+|f_0|_{k+m+1}\\
+[P]_{k+1} |f|_0+[P]_{k+1}\left(|h|_0+|f_0|_{m}\right)\big)\,.
\end{multline}
%
On the other hand taking the inductive assumption on $f$, multiplying by $[P]_1$ and using again interpolation we have
\begin{equation}\label{udinese}
[P]_{1}|f|_{k+2m}\leq C [P]_{1}(|h|_k+|f_0|_{k+m})+C[P]_{k+1} \left( |h|_0+|f_0|_{m}\right))\,.
\end{equation}
By interpolation we have
$$
[P]_1(|h|_k+|f_0|_{k+m})\leq C [P]_0(|h|_{k+1}+|f_0|_{k+m+1})+C [P]_{k+1}(|h|_0+|f_0|_{m})\,,
$$
therefore \eqref{udinese} reduces to
$$
\begin{aligned} 
{[P]}_1|f|_{k+2m}  \leq & C [P]_0(|h|_{k+1}+|f_0|_{k+m+1})+C[P]_{k+1} \left( |h|_0+|f_0|_{m}\right)\\
\leq & C (|h|_{k+1}+|f_0|_{k+m+1})+C[P]_{k+1} \left( |h|_0+|f_0|_{m}\right)
\end{aligned}
$$
which yields an estimate for the first terms of the right hand side of \eqref{spesa}. The second term $[P]_{k+1}|f|_{2m}$  is readily estimated by means of
$$
[P]_{k+1}|f|_{2m}\leq C[P]_{k+1} (|h|_0+|f_0|_m)\,.
$$
Therefore
$$
|\nabla_vf|_{k+2m}\leq  C\left(|h|_{k+1}+|f_0|_{k+m+1}+[P]_{k+1} (|h|_0+|f_0|_m) \right)
$$
and the claim follows.
\end{proof}
The last step is the estimate for time derivatives.
\begin{lemma}
For all solutions $f=S(P,h,f_0)$ with $P$ in the $\delta$-neighborhood above and every $k \geq 0$ there exists a constant $C$ such that
$$
\|f\|_{k+2m}\leq C\left(\|h\|_k+|f_0|_{k+m}\right)+C\,|[P]|_{k}\left(\|h\|_0+|f_0|_k \right)\,.
$$
\end{lemma}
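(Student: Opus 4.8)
The plan is to follow the scheme of the proof of Lemma~\ref{key}, but now the extra direction along which one differentiates is \emph{time} rather than space, exactly as in Hamilton's proof of Theorem~6 of \cite{positive}. By the definition of the grading \eqref{grading} (with $r=2m$), we have $\|f\|_{k+2m}=\sum_{2mj\le k+2m}|\partial_t^j f|_{k+2m-2mj}$, so it suffices to bound each term $|\partial_t^j f|_{k+2m-2mj}$. The $j=0$ term is $|f|_{k+2m}$, which is already controlled by Lemma~\ref{key}; for $j\ge 1$ the idea is to eliminate the time derivatives using the equation \eqref{basic} and thereby reduce everything to the space estimates of Lemma~\ref{key} and Theorem~\ref{apriori}. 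One may organise the argument either as an induction on $k$ with step $2m$ — writing $\|f\|_{k+2m}=|f|_{k+2m}+\|\partial_t f\|_k$ and observing that $\partial_t f=P(f)+h$ itself solves a parabolic equation with the same principal part $P$ and forcing $(\partial_t P)(f)+\partial_t h$ — or directly, as sketched below.

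The algebraic fact that makes the reduction work is the following. Differentiating \eqref{basic} in $t$ and substituting \eqref{basic} back in, one obtains that $\partial_t^j f$ is a universal finite linear combination (with constants depending only on $j$) of terms of the form $(\partial_t^{i_1}P)\cdots(\partial_t^{i_s}P)(f)$ with $s+i_1+\dots+i_s=j$, plus analogous terms applied to $\partial_t^a h$ with $a\le j$. The crucial point is that each operator $(\partial_t^{i_1}P)\cdots(\partial_t^{i_s}P)$ has \emph{space} order $2ms$, not $2mj$: the $\partial_t$'s that were not spent raising the number of factors have instead fallen on the coefficients of $P$. Thus a time derivative of $f$ is paid for with $2m$ space derivatives, distributed between differentiating $f$ fewer times and differentiating the coefficients of $P$; this is precisely the parabolic scaling built into \eqref{grading}.

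To estimate a given term $(\partial_t^{i_1}P)\cdots(\partial_t^{i_s}P)(f)$ in the norm $|\cdot|_{k+2m-2mj}$, I would apply the Leibniz rule to spread the $k+2m-2mj$ outer covariant derivatives over the coefficient factors and over $f$; a direct count shows that the total weight — counting each $\partial_t$ falling on $P$ as $2m$ — equals $k+2m$ no matter how the derivatives are distributed. One then invokes the product interpolation inequalities in the time-inclusive graded norms, the elliptic estimates of Lemma~\ref{key} and Theorem~\ref{apriori} for the resulting pure-space norms of $f$, the multiplicative interpolation $|[P]|_a|[P]|_b\le C|[P]|_0|[P]|_{a+b}$ for the products of coefficient factors, and the fact that $|[P]|_0=[P]_0$ is bounded on the chosen $\delta$-neighbourhood. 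Collecting the contributions, each one is seen to be of the form $C(\|h\|_k+|f_0|_{k+m})+C|[P]|_k(\|h\|_0+|f_0|_k)$: here one uses that every time-differentiated $h$ appearing, $\partial_t^a h$ at space order $k+2m-2mj$ with $2ma\le 2mj$, is controlled by $\|h\|_k$ (or by $\|h\|_0$ when it carries an $|[P]|_k$ prefactor), and that the space order landing on $f_0$ never exceeds $k+m$ in the $[P]$-free part, respectively $k$ in the $|[P]|_k$-prefactored part. The terms applied to $h$ are handled in exactly the same way and contribute only to the $\|h\|_k$ and $|[P]|_k\|h\|_0$ parts.

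The main obstacle is precisely this last bookkeeping: one must choose the interpolations so that, after every splitting, the surviving factors of $|[P]|$ sit at level \emph{exactly} $k$ — never higher — and the orders falling on $f_0$ and on $h$ are the ones in the statement and no larger. This is the same delicate balancing of the multiplicative and product interpolation inequalities that appears in the proof of Lemma~\ref{key}, now carried out in the $|[\cdot]|$-gradings; it is routine but lengthy, and is done in detail for $m=1$ in \cite{positive} (see also \cite{BryantXu}) and, for the additional devices needed when $m>1$, in \cite{HamiltonNash}. We therefore content ourselves with the scheme above.
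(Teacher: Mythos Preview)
Your proposal is correct and follows essentially the same approach as the paper: decompose $\|f\|_{k+2m}$ into the terms $|\partial_t^j f|_{k+2m-2mj}$, use Lemma~\ref{key} for $j=0$, and then use the equation $\partial_t f=Pf+h$ to trade each time derivative for $2m$ space derivatives (on $f$ or on the coefficients of $P$) before applying the product/multiplicative interpolation inequalities. The only difference is organisational: the paper runs a short induction on $j$, writing $\partial_t^{j+1}f=\partial_t^j(Pf+h)$ and interpolating once at each step, whereas you offer an induction on $k$ in steps of $2m$ or a direct full expansion of $\partial_t^j f$; all three variants amount to the same bookkeeping, and the paper's proof is at least as sketchy on the final interpolation details as yours.
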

\begin{proof}
We will estimate the quantity $|(\partial/\partial t)^jf|_{k+2m-2mj}$ by induction on $j$. For $j=0$ we can use lemma \ref{key}. Assume then to have up to some
$j$
\begin{multline*}
 |(\partial/\partial t)^jf|_{k+2m-2mj}\leq C(|(\partial/\partial t)^jh|_{k-2mj}+|(\partial/\partial t)^jf_0|_{k+m-2mj})+C|[P]|_{k}(|h_0|+|f_0|_1).
\end{multline*}
Now
$$
\begin{aligned}
|(\partial/\partial t)^{j+1}f|_{k-2mj}=|(\partial/\partial t)^j(Pf+h)|_{k-2mj}
\end{aligned}
$$
and by interpolation we get
\begin{multline*}
|(\partial/\partial t)^{j+1}f|_{k-2mj}\leq C ([(\partial/\partial t)^jP]_{k-2mj}|f|_{k-2mj+2m}\\+[P]_{k-2mj}|(\partial/\partial t)^jf|_{k-2mj+2m}+|(\partial/\partial t)^{j}h|_{k-2mj} )\,,
\end{multline*}
which implies the statement.
\end{proof}


\section{Proof of theorem $\ref{main}$}
We apply theorem \ref{general1} to the Hodge system we describe below. We consider
$$
E_-=\Lambda^{n-2,n-2}_\R\xrightarrow{D=i\partial\op} E=\Lambda^{n-1,n-1}_\R\,, 
$$
where $\Lambda^{p,p}_\R$ is the bundle of {\em real} $(p,p)$-forms; the subset $U$ is the set of smooth sections of  $\Lambda^{n-1,n-1}_+$ lying in the same cohomology class as $\varphi_0$
and
$$
L\colon C^{\infty}(M,\Lambda^{n-1,n-1}_+)\to C^{\infty}(M,\Lambda^{n-1,n-1}_\R)
$$
is the operator
$$
L(\varphi)=i\partial\op*(\rho\wedge *\varphi)+(n-1){\Delta}_{BC}(\varphi)\,.
$$

We prove that for every closed $\f$ in $C^{\infty}(M,\Lambda^{n-1,n-1}_+)$ and every closed $\psi$ in $C^{\infty}(M,\Lambda^{n-1,n-1}_\R)$ we have
\begin{equation}\label{L*}
L_{*|\f}(\psi)=(1-n)\,\Delta_{BC}\psi+i\partial\op \Phi_{\f}(\psi)
\end{equation}
where $\Phi_{\varphi}$ is a linear algebraic operator on $\psi$ with coefficients depending on the torsion of $\f$ in a universal way. In particular this shows that $L_{*|\f}$ is the restriction of a strongly elliptic operator to closed $(n-1,n-1)$-forms, since it has the same symbol as $(1-n)\Delta_{BC}\,$ which is strongly elliptic.

\medskip
As recalled in section \ref{the chern connection}, the curvature form $\rho$ of the Chern connection of an Hermitian manifold $(M,\omega,J)$ may be locally written as $\rho=i\rho_{k\bar l} dz^k\wedge d\bar{z}^l$ where
$$
\rho_{k \bar{l}}=-\frac{\partial^2}{\partial z^k \partial \bar{z}^l} \,{\rm log}(G)\,,
$$
$G$ being the determinant of the matrix $g=g_{i\bar j}$.
In the following $\dot{\alpha}$ will denote the derivative with respect to time of the tensor $\alpha$.

\begin{lemma}\label{varrho}
The derivative of $\rho$ is
$$
\dot{\rho}=-i\partial\op\,(\omega,\dot{\omega})
$$
where $(\cdot,\cdot)$ denotes the pointwise scalar product of $(1,1)$-forms.
\end{lemma}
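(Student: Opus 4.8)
The plan is to compute $\dot\rho$ directly in local holomorphic coordinates, starting from the well-known formula $\rho = -i\,\partial\op\log G$ with $G = \det(g_{i\bar j})$. Since the family $\varphi(t)$ is a family of positive $(n-1,n-1)$-forms, we first translate it into a family of Hermitian metrics: via the diffeomorphism $\mathcal L$ of section 2.1 (together with Lemma \ref{firstvariation}), a time-dependent $\varphi(t)$ corresponds to a time-dependent $\omega(t)$, and it is $\dot\omega$ that appears on the right-hand side, so all computations should be phrased in terms of $\omega = i g_{i\bar j}\,dz^i\wedge d\bar z^j$. The first concrete step is the standard identity
$$
\frac{\partial}{\partial t}\log\det(g_{i\bar j}) = g^{i\bar j}\,\dot g_{i\bar j}\,,
$$
which follows from Jacobi's formula $\partial_t\log\det A = \tr(A^{-1}\dot A)$ applied to $A = (g_{i\bar j})$.

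Next I would differentiate $\rho = -i\,\partial\op\log G$ under the (time-independent) operator $\partial\op$, obtaining
$$
\dot\rho = -i\,\partial\op\big(g^{i\bar j}\dot g_{i\bar j}\big)\,.
$$
The remaining task is purely algebraic: to identify the scalar function $g^{i\bar j}\dot g_{i\bar j}$ with the pointwise inner product $(\omega,\dot\omega)$ of $(1,1)$-forms induced by $g$. With the convention $\omega = i g_{i\bar j}\,dz^i\wedge d\bar z^j$ and $\dot\omega = i\dot g_{i\bar j}\,dz^i\wedge d\bar z^j$, the Hermitian inner product on $(1,1)$-forms gives exactly $(\omega,\dot\omega) = g^{i\bar k}g^{l\bar j}\,g_{i\bar j}\,\dot g_{l\bar k} = g^{l\bar k}\dot g_{l\bar k}$, which is the claimed coefficient. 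Hence $\dot\rho = -i\,\partial\op\,(\omega,\dot\omega)$.

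I expect the only real subtlety to be bookkeeping of conventions: the precise normalization relating $\omega$ to $g_{i\bar j}$, the factor of $i$ in $\rho$, and the definition of the pointwise scalar product $(\cdot,\cdot)$ on $(p,q)$-forms (conjugate-linear in which slot, and whether it carries combinatorial factors). Once these are pinned down consistently with section 2.1, the identity $g^{i\bar j}\dot g_{i\bar j} = (\omega,\dot\omega)$ is immediate and the lemma follows. One should also note that the formula is independent of the chosen coordinates because both sides are globally defined tensors, so it suffices to verify it at an arbitrary point, and one may even work at a point where $g_{i\bar j} = \delta_{ij}$ to shorten the inner-product computation.
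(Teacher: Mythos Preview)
Your proposal is correct and follows essentially the same route as the paper: differentiate the local formula $\rho=-i\,\partial\op\log G$ through the time-independent operator $\partial\op$, apply Jacobi's formula to get $\partial_t\log G=g^{i\bar j}\dot g_{i\bar j}$, and then identify this trace with $(\omega,\dot\omega)$. The extra discussion about translating $\varphi(t)$ to $\omega(t)$ and about normalization conventions is not needed for the lemma itself, but does no harm.
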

\begin{proof}
Using
$$
\frac{\partial}{\partial t}{\rm det}\,g=({\rm det}\, g)\,{\rm tr}\left(g^{-1}\dot{g}\right)
$$
we get
$$
\dot \rho_{k\bar l}=-\frac{\partial^2}{\partial z^k \partial \bar{z}^l}\,\,{\rm tr}\,\left(g^{-1}\dot{g}\right)\,.
$$
Now it is enough to observe that
$$
{\rm tr}\left(g^{-1}\dot{g}\right)=g^{i\bar j}\dot{g}_{i\bar j}=(\omega,\dot{\omega})\,.
$$
\end{proof}
\noindent We write the operator $L$ as
$$
L=P+Q
$$
where
$$
P(\varphi)=i\partial\op*(\rho\wedge *\varphi)\,,\quad Q(\varphi)=(n-1){\Delta}_{BC}(\varphi)
$$
and compute the first derivative of $P$ and $Q$ separately.
Let $\psi=\frac{d}{dt}\varphi$ be a tangent vector to $U$ at $\varphi$. Then we can write
$$
\psi=h_1\varphi+*h_0
$$
where $h_1$ is a smooth function on $M$ and $h_0$ is a section of $\Lambda_{0}^{1,1}$.
Note that
$$
h_1=\frac{1}{n}(\omega,\dot{\omega})=\frac{1}{n}(\varphi,\dot{\varphi})\,.
$$
{
The derivative of $P$ is now obtained using lemma \ref{varrho}:
$$
P_{*|\varphi}(\psi)= P_{*|\varphi}(\psi)=i\d\op\,*\left(\dot{\rho}\wedge *\varphi\right)+i\d\op\Phi_1(\psi) =n\d\op\,*\left(\d\op h_1\wedge *\varphi\right)+i\d\op\Phi_1(\psi)\,,
$$
where $\Phi_1$ is an algebraic operator depending on $\varphi$ in a universal way. \\
As for the derivative of $Q$,  using lemma \ref{firstvariation} we have
$$
Q_{*|\varphi}(\psi)=Q_{*|\varphi}(h_1\varphi+*h_0)=-\partial \op*\left(\partial \op h_1\wedge* \varphi+(1-n)\partial \op h_0\right)+ i\d\op\Phi_2(\psi)\,,
$$
for a suitable linear zeroth order operator $\Phi_2$.

Summing up we have
$$
\begin{aligned}
L_{*|\varphi}(\psi)=(n-1)\partial \op*\left(\partial \op h_1\wedge* \varphi+\partial \op h_0\right)+i\d\op\Phi_1(\psi)+i\d\op\Phi_2(\psi)\,.
\end{aligned}
$$
On the other hand using that $\psi$ is closed we have
$$
\begin{aligned}
(\Delta_{BC})_\varphi\psi=&\,\partial\ov{\partial}\ov{\partial}^*\partial^*\psi=\partial\ov{\partial}\ov{\partial}^*\partial^*(h_1\varphi+*h_0)\\
=&\,-\partial \op*\left(\partial \op h_1\wedge* \varphi+\partial \op h_0\right)+ i\d\op\Phi_3(\psi)
\end{aligned}
$$
and therefore
$$
L_{*|\varphi}(\psi)=-(n-1)(\Delta_{BC})_\varphi\psi+i\d\op\Phi_{\varphi}(\psi)
$$
where $\Phi_\varphi=\Phi_1+\Phi_2-\frac{1}{n-1}\Phi_3$.
Hence equation \eqref{L*} is established.}
\medskip
In order to verify the last hypothesis of theorem \ref{general1} let us consider
$$
l_\varphi=-(n-1)(\Delta_A)_\varphi+i \Phi_\varphi \circ \partial\op\,,
$$
where $\Delta_{A}$
is the modified Aeppli Laplacian defined in section \ref{BCsection}.
Here it is enough to recall that $-\Delta_A$ is strongly elliptic and
$\Delta_{BC}(\partial\ov{\partial}\theta)=\d\op\left(\Delta_A\theta\right)$ for every form $\theta$.

\medskip
 The last step of the proof consists in showing that if $\varphi_0$ is the $(n-1,n-1)$-positive form of a K\"alher structure, then the solution $\varphi_0+\beta(t)$ to \eqref{theflow} corresponds to a family of K\"ahler forms $\omega(t)$ solving the Calabi flow. \\
Let $(M,J,\omega_0)$ be a compact K\"ahler manifold and let $\omega(t)$ be a solution to the Calabi flow
\begin{equation}\label{calabi}
\begin{cases}
\frac{\partial}{\partial t}\omega(t)=i\partial\op\,s_t\\
\omega(0)=\omega_0\,
\end{cases}
\end{equation}
where $s_t$ is the scalar curvature of metric $\omega(t)$. Since in the K\"ahler case the Levi-Civita and the Chern connection coincide, we can consider $s$ as the scalar curvature of the Chern connection. Let $\varphi(t)=*_t\omega(t)=\frac{1}{(n-1)!}\omega(t)^n$. Then
$$
\frac{d}{dt}\varphi=\frac{1}{(n-1)!}\,\frac{d}{dt}(\omega^{n-1})=\frac{1}{(n-2)!}\,\left(\frac{d}{dt}\omega\right)\wedge \omega^{n-2}
$$
and using \eqref{calabi} we get
$$
\frac{d}{dt}\varphi=\frac{1}{(n-2)!} \left(i\partial\op\,s \right)\wedge \omega^{n-2}\,.
$$

\begin{lemma}\label{lemmacalabi}
Let $(M,J,\omega)$ be a K\"ahler  manifold. Then the following identity holds
$$
\frac{1}{(n-2)!} \left(i\partial\op\,s \right)\wedge \omega^{n-2}=i\partial\op*(\rho\wedge *\varphi)\,,
$$
where $\varphi=*\omega$ and $s$ and $\rho$ are the scalar curvature and the Ricci form, respectively.
\end{lemma}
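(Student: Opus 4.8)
The plan is to reduce the claimed identity to a purely algebraic computation of $*(\rho\wedge*\varphi)$ and then collapse the result by means of the K\"ahler identities. First I would observe that $*\varphi=\omega$: since $\varphi=*\omega$ we have $\varphi=\frac{1}{(n-1)!}\omega^{n-1}$, while in general $\varphi=\frac{1}{(n-1)!}(*\varphi)^{n-1}$, so $(*\varphi)^{n-1}=\omega^{n-1}$ and the injectivity of $\mathcal L$ forces $*\varphi=\omega$. Hence the right-hand side of the identity equals $i\partial\op*(\rho\wedge\omega)$, and the whole statement becomes an assertion about this $(n-1,n-1)$-form.

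The core step is to compute $*(\rho\wedge\omega)$. I would split $\rho$ according to \eqref{dec(1,1)} as $\rho=\tfrac{s}{n}\,\omega+\rho_0$ with $\rho_0\in\Lambda^{1,1}_0$; the coefficient $\tfrac{s}{n}$ comes from $(\rho,\omega)=g^{k\bar l}\rho_{k\bar l}=s$ and $(\omega,\omega)=n$. Then $\rho\wedge\omega=\tfrac{s}{n}\,\omega^2+\rho_0\wedge\omega$, and applying $*$ by means of the identity $*\bigl(\tfrac{1}{k!}\omega^k\bigr)=\tfrac{1}{(n-k)!}\omega^{n-k}$ (which generalizes $*\omega=\tfrac{1}{(n-1)!}\omega^{n-1}$) together with Lemma~\ref{star(1,1)} taken with $k=1$ yields
$$
*(\rho\wedge\omega)=\frac{2s}{n(n-2)!}\,\omega^{n-2}-\frac{1}{(n-3)!}\,\rho_0\wedge\omega^{n-3}\,.
$$
Re-writing $\rho_0=\rho-\tfrac{s}{n}\omega$ and collecting the $\omega^{n-2}$-terms, using $\tfrac{1}{(n-3)!}=\tfrac{n-2}{(n-2)!}$, this becomes
$$
*(\rho\wedge\omega)=\frac{s}{(n-2)!}\,\omega^{n-2}-\frac{1}{(n-3)!}\,\rho\wedge\omega^{n-3}\,.
$$

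Finally I would apply $i\partial\op$ and exploit that $g$ is K\"ahler. Both $\omega$ and $\rho$ are real $(1,1)$-forms with $d\omega=0$ and $d\rho=0$, hence $\partial\omega=\op\omega=0$ and $\partial\rho=\op\rho=0$; consequently $\op(\rho\wedge\omega^{n-3})=0$, so the second term above is annihilated by $i\partial\op$. For the first term, $\op(s\,\omega^{n-2})=(\op s)\wedge\omega^{n-2}$ and $\partial\bigl((\op s)\wedge\omega^{n-2}\bigr)=(\partial\op s)\wedge\omega^{n-2}$, whence $i\partial\op\bigl(s\,\omega^{n-2}\bigr)=i(\partial\op s)\wedge\omega^{n-2}$. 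Therefore $i\partial\op*(\rho\wedge\omega)=\frac{1}{(n-2)!}(i\partial\op s)\wedge\omega^{n-2}$, which is the left-hand side. The case $n=2$, in which $\omega^{n-3}$ does not occur, should be checked directly, and it is immediate since there $\rho\wedge\omega=s\,\tfrac{\omega^2}{2!}$ and $*\tfrac{\omega^2}{2!}=1$. There is no genuine obstacle here: the argument is a routine computation, and the only points requiring care are the bookkeeping of the combinatorial constants in the Lefschetz-type formulas and the normalization and sign conventions for the Hodge star and the pointwise inner product.
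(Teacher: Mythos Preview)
Your proof is correct and follows essentially the same route as the paper's: split $\rho=\tfrac{s}{n}\omega+\rho_0$, compute $*(\rho\wedge\omega)$ via Lemma~\ref{star(1,1)} and the formula $*\omega^2=\tfrac{2}{(n-2)!}\omega^{n-2}$, and then use the closedness of $\omega$ and $\rho$ to collapse under $i\partial\op$. The only cosmetic difference is that you substitute $\rho_0=\rho-\tfrac{s}{n}\omega$ before applying $i\partial\op$ (so the extra term dies because $\op(\rho\wedge\omega^{n-3})=0$), whereas the paper applies $\partial\op$ first and then uses $\partial\op\rho_0=-\partial\op(\tfrac{s}{n}\omega)$; the algebra is identical.
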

\begin{proof}
The proof of this lemma is a straightforward computation taking into account that $\rho$ and $\omega$ are closed. Indeed
$$
\partial\op*(\rho\wedge *\varphi)=\partial\op*\left(\frac{s}{n}\omega^2+\rho_0\wedge \omega\right)\,,
$$
where we have written $\rho = \frac{s}{n}\omega + \rho_0$ according to the decomposition \eqref{dec(1,1)}.
Using lemma \ref{star(1,1)}, the closure of $\omega$ and the formula
$$
*\,\omega^2=\frac{2}{(n-2)!}\,\omega^{n-2}\,
$$
we have
$$
\partial\op*(\rho\wedge *\varphi)=\frac{2}{n(n-2)!}\partial\op s\wedge \omega^{n-2}-\frac{1}{(n-3)!}\partial\op \rho_0\wedge\omega\,.
$$
Moreover, taking into account that $\rho$ is closed, we get
$$
\partial\op(\rho_0)=-\partial\op\left(\frac{s}{n}\,\omega\right)
$$
and therefore
$$
\begin{aligned}
\partial\op*(\rho\wedge *\varphi)
=\frac{1}{(n-2)!}\partial\op s\wedge \omega^{n-2}\,,
\end{aligned}
$$
as required.
\end{proof}
Now we can conclude the proof of theorem \ref{main} showing that if $\omega(t)$ is a solution of \eqref{calabi}, then the corresponding $\varphi(t)$ solves \eqref{theflow}. In view of lemma \ref{lemmacalabi}, $\varphi(t)$ satisfies
$$
\frac{d}{dt}\varphi=i\partial\op*(\rho\wedge *\varphi)
$$
and since $\omega(t)$ is closed, we have $\Delta_{BC}\varphi=0$ and $\varphi(t)$ in particular satisfies
$$
\frac{d}{dt}\varphi=i\partial\op*(\rho\wedge *\varphi)+(n-1)\Delta_{BC}\varphi\,.
$$
Hence $\varphi$ solves \eqref{theflow} and the statement follows from the uniqueness of solutions.


\section{The flow on the Iwasawa manifold}
In this section we study flow \eqref{theflow} in the case of the Iwasawa manifold. 
The Iwasawa manifold is the compact complex manifold defined as the quotient
$$
M={\rm H}_{3}(\C)\slash \Gamma
$$
where ${\rm H}_{3}(\C)$ is the $3$-dimensional complex Heisenberg group
$$
{\rm H}:=\left\{
\left(
\begin{array}{cccccc}
1  &z^1    &z^2  \\
0  &1      &z^3     \\
0  &0      &1\\
\end{array}
\right)\,:z^k\in\C\,,k=1,2,3
\right\}
$$
and $\Gamma$ is the co-compact lattice
$$
\Gamma:=\left\{
\left(
\begin{array}{cccccc}
1  &z^1    &z^2  \\
0  &1      &z^3     \\
0  &0      &1\\
\end{array}
\right)\in {\rm H}\,:z^k\in\Z\oplus i \Z\,,k=1,2,3
\right\}
$$
(see e.g. \cite{elsa}). We denote by $J$ the natural complex structure on $M$ induced by ${\rm H}_3(\C)$ and we set
$$
\alpha^1=dz^1\,,\quad \alpha^2=dz^2\,,\quad \alpha^3=-dz^3+z^1dz^2\,.
$$
Then $\{\alpha^1,\alpha^2,\alpha^3\}$ is a global $(1,0)$-coframe satisfying the structure equations
$$
d\alpha^1=d\alpha^2=0\,,\quad d\alpha^3=\alpha^1\wedge \alpha^2\,.
$$
It is immediate to verify that every left-invariant Hermitian metric on $(M,J)$ is balanced.

\medskip
Let us study flow \eqref{theflow} with initial condition
$$
\omega_0=i\alpha^{1\bar 1}+i\alpha^{2\bar 2}+i\alpha^{3\bar 3}\,.
$$
(Here we adopt the notation $\alpha^{ijk\dots}=\alpha^i\wedge \alpha^j\wedge \alpha^{k}\wedge \dots$).
Since \eqref{theflow} is invariant by biholomorphisms and $\omega_0$ is left-invariant, the solution $\omega(t)$ to the flow has to be left-invariant. On the other hand, from \cite{luigiproc} it follows  that a left-invariant Hermitian metric on a $2$-step nilmanifold has always vanishing Ricci form. Therefore in this case the flow \eqref{theflow} reduces to
$$
\begin{cases}
\frac{d}{dt}\varphi=2\Delta_{BC}\varphi\\
\varphi(0)=\frac{1}{2}\omega_0^2
\end{cases}
$$
which can be alternatively rewritten in term of $2$-forms as
\begin{equation}\label{flowiwa}
\begin{cases}
\frac{d}{dt}\omega=\,\iota_{\omega}\Delta_{BC}\omega^2\\
\omega(0)=\omega_0
\end{cases}
\end{equation}
$\iota_{\omega}$ denoting the contraction along $\omega$. We seek for a solution to the flow taking the following diagonal expression
$$
\omega=ig_1\,\alpha^{1\bar 1}+ig_2\,\alpha^{2\bar 2}+ig_3\,\alpha^{3\bar 3}
$$
where $g_1,g_2,g_3$ are real numbers depending smoothly on $t$. Now,
$$
\Delta_{BC}\omega^2=-\d\op*\d\op*(\omega^2)=-2\d\op*\d\op\,\omega
$$
and since
$$
\d\op\,\omega=-ig_{3}\alpha^{1\bar 1 2\bar 2}\,,\quad *\alpha^{1\bar 1 2\bar 2}=-i\frac{g_3}{g_1g_2}\alpha^{3\bar 3}
$$
we have
$$
\Delta_{BC}\omega^2=-\frac{g_3^2}{g_1g_2}\d\op(\alpha^{3\bar 3})=\frac{g_3^2}{g_1g_2}\,\alpha^{1\bar 1 2\bar 2}\,.
$$
Since
$$
\iota_{\omega}\left(\frac{g_3^2}{g_1g_2}\,\alpha^{1\bar 1 2\bar 2}\right)=-i\frac{g_3^2}{g_1g_2^2}\alpha^{1\bar 1}-i\frac{g_3^2}{g_1^2g_2}\alpha^{2\bar 2}+i\frac{g_3^3}{g_1^2g_2^2}\alpha^{3\bar 3}
$$
then system \eqref{flowiwa} reads as
$$
\begin{cases}
\dot{g}_1=-\frac{g_3^2}{g_1g_2^2}\\
\dot{g}_2=-\frac{g_3^2}{g_1^2g_2}\\
\dot{g}_3=\frac{g_3^3}{g_1^2g_2^2}
\end{cases}
$$
with initial condition
$$
g_1(0)=g_2(0)=g_3(0)=1\,.
$$
This last system has the solution
$$
g_1(t)=g_2(t)=\sqrt[6]{1-6t}\,,\quad g_3(t)=\frac{1}{\sqrt[6]{1-6t}}
$$
and
$$
\omega(t)=\sqrt[6]{1-6t}\,\alpha^{1\bar 1}+\sqrt[6]{1-6t}\,\alpha^{2\bar 2}+\frac{1}{\sqrt[6]{1-6t}}\,\alpha^{3\bar 3}
$$
solves \eqref{flowiwa}.

\medskip
So this is an example where the flow admits an ancient non-eternal solution diverging for $t\to -\infty$.



\begin{thebibliography}{12}

\bibitem{elsa}
E. Abbena, S. Garbiero, S. M. Salamon:
Almost Hermitian geometry on six dimensional nilmanifolds.
{\em Ann. Scuola Norm. Sup. Pisa Cl. Sci.} (4) {\bf 30} (2001), no. 1, 147--170.

\bibitem{AB1}
L. Alessandrini, G. Bassanelli: Small deformations of a class of compact non-K\"ahler manifolds. {\em Proc. Amer. Math. Soc.} {\bf 109} (1990), no. 4, 1059--1062.

\bibitem{AB2}
L. Alessandrini, G. Bassanelli: Metric properties of manifolds bimeromorphic to compact K\"ahler spaces, {\em J. Differential
Geom.} {\bf 37} (1993), 95--121.

\bibitem{ivanov}
B. Alexandrov, S. Ivanov: Vanishing theorems on Hermitian manifolds. {\em Differential Geom. Appl.} {\bf 14} (2001), no. 3, 251--265.

\bibitem{Angella} D. Angella, A. Tomassini: The $\partial\overline{\partial}$-lemma and Bott-Chern cohomology, {\em Invent. Math.} {\bf 192} (2013), no. 1, 71--81.

\bibitem{bigolin}
B. Bigolin: Gruppi di Aeppli, {\em Ann. Scuola Norm. Sup. Pisa (3)}  {\bf 23} (1969), no. 2, 259--287.

\bibitem{Bryant}
R. Bryant: {\it Some remarks on G$_2$-structures},
Proceedings of Gokova Geometry/Topology Conference, Gokova 2006,  75--109.

\bibitem{BryantXu}
R. Bryant, F. Xu: Laplacian Flow for Closed $G_2$-Structures: Short Time Behavior. {\tt  arXiv:1101.2004}\,.

\bibitem{chen}
X.X. Chen, W. Y. He: On the Calabi flow. {\em Amer. J. Math.} {\bf 130} (2008), no. 2, 539--570.

\bibitem{Demailly}
J.-P. Demailly: {\em Complex Analytic and Differential Geometry},
{\tt on-line book}, 2009.

\bibitem{finosalamonparton}
A. Fino, M. Parton, S. M. Salamon: Families of strong KT structures in six dimensions. {\em Comment. Math. Helv.} {\bf 79} (2004), no. 2, 317--340.

\bibitem{fuyau}
J. Fu, S.-T. Yau: A note on small deformations of balanced manifolds. {\em C. R. Math. Acad. Sci. Paris} {\bf 349} (2011), no. 13-14, 793--796.

\bibitem{fuliyau}
 J. Fu, J. Li, S.-T. Yau: Balanced metrics on non-K\"ahler Calabi-Yau threefolds. {\em J. Differential Geom.} {\bf 90} (2012), 81--129.

\bibitem{Gaud}
P. Gauduchon: Fibr\'es hermitiens \`a endomorphisme de Ricci non n\'egatif., {\em Bull. Soc. Math. Fr.} {\bf 105} (1977), 113--140.

\bibitem{Gaudbumi}
P. Gauduchon: Hermitian connections and Dirac operators. {\em Boll. Un. Mat. Ital. B (7)} {\bf 11} (1997), no. 2, suppl., 257--288.

\bibitem{Gill} M. Gill: Convergence of the parabolic complex Monge-Amp\`ere equation on compact Hermitian manifolds. {\em Comm. Anal. Geom.} {\bf 19} (2011), no. 2, 277--303.

\bibitem{GGP}
D. Grantcharov, G. Grantcharov, Y.S. Poon: Calabi-Yau connections with torsion on toric bundles. {\em J. Differential Geom.} {\bf 78} (2008), no. 1, 13--32.


\bibitem{Gri}
S. Grigorian: Short-time behaviour of a modified Laplacian coflow of $G_2$-structures. {\em Adv. Math.} {\bf 248} (2013), 378--415.

\bibitem{HamiltonNash}
R. S. Hamilton:  The inverse function theorem of Nash and Moser,  {\em Bull. Amer. Math. Soc.} (N.S.) {\bf 7} (1982), no. 1, 65--222.

\bibitem{positive}
R. S. Hamilton: Three-manifolds with positive Ricci curvature. {\em J. Differential Geom.} {\bf 17} (1982), no. 2, 255--306.

\bibitem{hormander}
L. H\"ormander: {\em The Analysis of Linear Partial Differential Operators I. Distribution theory and Fourier analysis.} Grundlehren der Mathematischen Wissenschaften, {\bf 256}. Springer-Verlag, Berlin, 1990.

\bibitem{Polden}
G. Huisken, A. Polden: {\em Geometric evolution equations for hypersurfaces.} Calculus of variations and geometric evolution problems (Cetraro, 1996), SpringerÐVerlag, Berlin, 1999, 45--84.


\bibitem{Kar}  S. Karigiannis, B. McKay, M.-P. Tsui: Soliton solutions for the Laplacian coflow of some
${\rm G}_2$ structures with symmetry. {\em Differential Geom. Appl.} {\bf 30} (2012), 318--333.

\bibitem{liwang}
H.-V. Le, G. Wang:
Anti-complexified Ricci flow on compact symplectic manifolds. {\em J. Reine Angew. Math}. {\bf 530} (2001), 17--31.


\bibitem{M}
M. L. Michelsohn:
On the existence of special metrics in complex geometry.
{\em Acta Math.} {\bf 149} (1982), no. 3-4, 261--295.

\bibitem{saracco} A. Saracco, A. Tomassini: On deformations of compact balanced manifolds. {\em Proc. Amer. Math. Soc.} {\bf 139} (2011), no. 2, 641--653.

\bibitem{Schweitzer} M. Schweitzer:  Autour de la cohomologie de Bott-Chern, {\tt arXiv:0709.3528v1}.

\bibitem{streets-tian1}
J. Streets, G. Tian: Hermitian curvature flow. {\em J. Eur. Math. Soc. (JEMS)} {\bf 13} (2011), no. 3, 601--634.

\bibitem{streets-tian2}
J. Streets, G. Tian: A parabolic flow of pluriclosed metrics. {\em Int. Math. Res. Notices} (2010), 3101--3133.

\bibitem{streets-tian3}
J. Streets, G. Tian: Symplectic curvature flow. {\tt  arXiv:1012.2104}. To appear in {\em J. Reine Angew. Math}.

\bibitem{tosatti}
V. Tosatti, B. Weinkove: Estimates for the complex Monge-Amp\'ere equation on Hermitian and balanced manifolds. {\em Asian J. Math.} {\bf 14 } (2010), no. 1, 19--40.

\bibitem{tosatti2} V. Tosatti, B. Weinkove: On the evolution of a Hermitian metric by its Chern-Ricci form, {\tt arXiv:1201.0312}.

\bibitem{tosatti3} V. Tosatti, B. Weinkove:
The Chern-Ricci flow on complex surfaces, {\em Compos. Math.} {\bf 149} (2013), no. 12, 2101--2138.

\bibitem{ugarte}
L. Ugarte: Hermitian structures on six dimensional nilmanifolds, {\em Transfor. Groups.} {\bf 12} (2007),
175--202.

\bibitem{luigiproc} L. Vezzoni: A note on canonical Ricci forms on 2-step nilmanifolds. {\em Proc. Amer. Math. Soc.}, {\bf 41} (2013) 325--333.

\bibitem{luigiflow} L. Vezzoni: On Hermitian curvature flow on almost complex manifolds. {\em Differential Geom. Appl.} {\bf 29} (2011), 709--722.

\bibitem{witt1}  H. Weiss, F. Witt: A heat flow for special metrics. A heat flow for special metrics. {\em Adv. Math.} {\bf 231} (2012), no. 6, 3288--3322..

\bibitem{witt2} H. Weiss, F. Witt: Energy functionals and soliton equations for ${\rm G}_2$-forms. {\em Ann. Global Anal. Geom.} {\bf 42} (2012), no. 4, 585--610.

\bibitem{XuYe}
F. Xu,  R. Ye: Existence, convergence and limit map of the Laplacian Flow, {\tt arXiv:0912.0074}.
\end{thebibliography}
\end{document}